\documentclass[11pt,letterpaper]{amsart}
\usepackage{amsmath,amssymb,amsthm,booktabs,longtable}
\usepackage{caption}
\usepackage[colorlinks=true,linkcolor=blue,citecolor=blue,urlcolor=blue]{hyperref}
\numberwithin{equation}{section}
\allowdisplaybreaks[2]
\newtheorem{theorem}{Theorem}[section]
\newtheorem{lemma}[theorem]{Lemma}
\newtheorem{proposition}[theorem]{Proposition}
\newtheorem{question}{Question}[section]
\newtheorem{problem}[question]{Problem}
\theoremstyle{definition}
\theoremstyle{remark}

\theoremstyle{plain}
\newcommand{\Cn}{\mathbb C^n}
\newcommand{\dd}{\,d}
\newcommand{\diag}{\operatorname{diag}}
\newcommand{\Hol}{\operatorname{Hol}}

\title[Zero-product problem for Toeplitz operators]{Zero-product problem for Toeplitz operators\\
on the Fock space}
\date{}
\author{Jie Qin}
\address{School of Mathematics and Statistics, Chongqing Technology and Business University, 400067, China}
\email{qinjie24520@163.com}
\subjclass[2020]{Primary 47B35, Secondary 30H20}
\keywords{Toeplitz operators, Zero-product problem, Fock space}
\begin{document}
\begin{abstract}
We answer Bauer and Le's question on zero products of Toeplitz
operators on the Fock space $F^2(\mathbb C^n)$[JFA, 261 (2011), 9, 2617--2640]. For $n\ge2$, we construct two bounded nonradial Schwartz
symbols on $\mathbb C^n$ whose Toeplitz operators are nonzero
and have zero product on $F^2(\mathbb C^n)$.  For $n=1$ and each $c\in(1/2,1)$, we
construct two smooth nonradial symbols of growth at most $Ce^{c|z|^2}$
for some constant $C>0$. Their extended Toeplitz operators in
$F^2(\mathbb C)$ are nonzero
and have zero product on all holomorphic polynomials. Moreover,  the second symbol is bounded when $c\geq3/4$. 
Our proofs use 
Gaussian kernel calculations, matrix identities, theta functions
and Fourier transform.
\end{abstract}

\thanks{The author was supported by the National Natural Science Foundation of China (12501154) and the Science and Technology Research Program of Chongqing Municipal Education Commission (KJQN202400806).}
\maketitle

\section{Introduction}

Let $n\ge1$ be an integer.
We write coordinate vectors as columns and use the superscripts
$T$ and $*$ for transpose and conjugate transpose, respectively. For $z=(z_1,\ldots,z_n)^T$ and
$w=(w_1,\ldots,w_n)^T$ in $\Cn$, set
$$
 z\cdot w=w^*z=\sum_{\ell=1}^n z_\ell\overline{w_\ell},
 \quad |z|=(z\cdot z)^{1/2}.
$$
Let $dV$ denote Lebesgue measure on $\Cn\simeq\mathbb R^{2n}$, and set
$$
 d\mu(z)=\pi^{-n}e^{-|z|^2}\dd V(z).
$$
The Fock space and its inner product are given by
$$
 F^2(\Cn)=L^2(\Cn,d\mu)\cap\Hol(\Cn),\quad
 \langle h,k\rangle=\int_{\Cn}h\overline{k}\dd\mu,
$$
where $\Hol(\Cn)$ denotes the space of entire functions. The reproducing kernels are $K_z(w)=e^{w\cdot z}$
and satisfy $\langle f,K_z\rangle=f(z)$ for every $f\in F^2(\Cn)$.

Let $P:L^2(\Cn,d\mu)\to F^2(\Cn)$ be the orthogonal projection. For a suitable measurable function $\varphi:\Cn\to\mathbb C$, the
Toeplitz operator $T_\varphi$ with the symbol $\varphi$ on $F^2(\Cn)$ is defined by $$T_\varphi:= P M_\varphi$$ where $M_\varphi$ denotes multiplication by $\varphi$. Its natural domain
consists of all functions $f\in F^2(\Cn)$ for which $\varphi f$
belongs to $L^2(\Cn,d\mu)$. For such $f$, the reproducing property gives
\begin{equation}\label{E4}
\begin{aligned}
(T_\varphi f)(z)
&=\langle P(\varphi f),K_z\rangle=\langle\varphi f,K_z\rangle\\
&=\int_{\Cn}\varphi(w)f(w)e^{z\cdot w}\,\dd\mu(w).
\end{aligned}
\end{equation}
If $\varphi$ is bounded, $T_\varphi$ is bounded on $F^2(\Cn)$.

The zero product problem asks whether $T_fT_g=0$ forces one of
the symbols to vanish almost everywhere. In their seminal work
\cite{BH}, Brown and Halmos
proved this implication for two essentially bounded symbols on
the Hardy space of the unit circle. The finite product version
can also be posed on other Hilbert spaces of holomorphic functions,
including the Bergman and Fock spaces.

\begin{problem}\label{prob:zero-product}
Let $N\ge2$ be an integer, and fix a Hardy, Bergman or Fock space.
Suppose $f_1,\ldots,f_N$ are  general  essentially bounded measurable
symbols whose Toeplitz operators satisfy
$$
 T_{f_1}\cdots T_{f_N}=0\quad\text{on this space}.
$$
Does it follow that one of these functions must be zero?
\end{problem}
The finite product problem
remained open for over thirty years on the Hardy space of the unit circle. Eventually,
Aleman and Vukoti\'c answered it affirmatively for every finite
$N$ in 2009 \cite{AV}.

On the Bergman space of the unit disk, the first results about the zero-product problem were obtained by Ahern and \v{C}u\v{c}kovi\'c \cite{AC2001}. They proved it for bounded harmonic
symbols and for bounded symbols with one radial
\cite{AC2004} on the Bergman space of the unit disk. Here a measurable symbol is radial if
it depends only on $|z|$ up to equality almost everywhere.
Le considered products of Toeplitz operators with bounded symbols,
all but possibly one radial. On the Bergman space of the unit disk
\cite{Le2010} and that of the unit ball in $\Cn$ with $n\ge2$
\cite{Le2009}, such a product has finite rank only if one symbol
vanishes almost everywhere. In particular, Le and Tikaradze treated bounded
pluriharmonic symbols $f,g$ on the unit ball and established
\cite{LeTikaradze2022}
$$
\operatorname{rank}(T_fT_g)<\infty
\quad\Longrightarrow\quad f=0\ \hbox{or}\ g=0.
$$
Their result gives the zero product implication for this class
of symbols on the unit ball. For two arbitrary bounded symbols,
the zero product problem remains open even on the unit disk
\cite{DasNarayanan2025}.

The Bergman space results above concern bounded symbols.
On the Fock space, allowing symbols with Gaussian growth
introduces an additional difficulty. The corresponding Toeplitz
operators need not be bounded, so the domains of their products
must also be specified \cite{BL}.
Bauer and Le studied the problem in the growth classes
\begin{equation}\label{E5}
 \mathcal D_c=\{u:\Cn\to\mathbb C\text{ measurable}:
               e^{-c|\cdot|^2}u\in L^\infty(\Cn)\},
 \quad c>0.
\end{equation}
For the unbounded symbols, we write $\widetilde T_u$ for the extension
in \cite[Definition 4.1]{BL}, which agrees with $T_u$ when $u$ is
bounded. Its definition in $F^2(\mathbb C)$ is recalled in
Section \ref{sec:moment-example}. Bauer and Le proved that
$\widetilde T_f\widetilde T_g p=0$ for every holomorphic polynomial
$p$ in $F^2(\Cn)$ implies $f=0$ or $g=0$ almost everywhere when
$f,g\in\mathcal D_c$ for some $c<1$ and one symbol is radial
\cite{BL}.

Two examples in the same paper show the role of the growth
condition and the number of factors. At $c=1$, Bauer and Le
constructed two nonzero unbounded radial symbols whose extended
Toeplitz operators have zero product \cite[Proposition 4.5]{BL}. They also constructed
three nonzero bounded radial symbols whose Toeplitz operators
have zero product \cite[Proposition 3.7]{BL}. The latter example
gives a negative answer to Problem \ref{prob:zero-product} for
three factors on the Fock space. Neither example addresses
the case of two bounded nonradial symbols.
For harmonic symbols in one complex variable whose entire parts
are bounded in modulus by $ce^{C|z|}$ for some constants $c,C>0$
independent of $z$, Bauer,
Choe and Koo proved that one symbol vanishes under the stronger
assumption that both ordered products vanish
\cite[Theorem 4.2]{Bauer2015}.

The following question, raised by Bauer and Le in
$\mathcal D_c$, includes the case of two bounded symbols without
a radiality assumption \cite[Question A]{BL}.

\begin{question}\label{q:BL}
Let $f,g\in\mathcal D_c$ for some  $0<c<1$ such that
$T_f T_g =0$. Is it true that $f=0$ or $g=0$ almost everywhere on $\Cn$?
\end{question}

In this paper, we give a negative answer to Question \ref{q:BL} for every $n\ge1$.
For $n\ge2$, we construct two nonradial Schwartz symbols. To specify this
class, we identify $z\in\Cn$ with
$$
 x=(\operatorname{Re}z_1,\operatorname{Im}z_1,\ldots,
    \operatorname{Re}z_n,\operatorname{Im}z_n)^T\in\mathbb R^{2n}.
$$
For $\alpha,\beta\in\mathbb N_0^{2n}$, where
$\mathbb N_0=\{0,1,2,\ldots\}$, write
$$
 x^\alpha=\prod_{\ell=1}^{2n}x_\ell^{\alpha_\ell},\quad
 \partial_x^\beta=
 \prod_{\ell=1}^{2n}\left(\frac{\partial}{\partial x_\ell}
                    \right)^{\beta_\ell}.
$$
The Schwartz class $\mathcal S(\mathbb C^n)$ consists of all
$u\in C^\infty(\mathbb C^n)$ such that
\begin{equation}\label{E6}
\sup_{x\in\mathbb R^{2n}}
|x^\alpha\partial_x^\beta u(x)|<\infty
\quad\text{for every }\alpha,\beta\in\mathbb N_0^{2n},
\end{equation}
where $\mathbb C^n$ is identified with $\mathbb R^{2n}$,
and smoothness and differentiation are understood in real coordinates.
A Schwartz function and all its real partial derivatives decay
faster than every negative power of $|x|$ as $|x|\to\infty$, see
\cite{Igusa2000}.
Our main result gives explicit symbols in this class.

\begin{theorem}\label{thm:counterexample}
Let $n\ge2$. For $z=(z_1,\ldots,z_n)^T\in\Cn$, set
\begin{equation}\label{eq:quadratic-forms}
 X=|z_1|^2-|z_2|^2,\quad
 Y=2\operatorname{Im}(\overline z_1z_2),\quad
 Z=2\operatorname{Re}(\overline z_1z_2).
\end{equation}
Define
\begin{equation}\label{eq:f}
 f(z)=2e^{-|z|^2}
 \left\{i e^{-2iY}\sin\bigl(2(X-Z)\bigr)
              -e^{2iY}\sin\bigl(2(X+Z)\bigr)\right\}
\end{equation}
and
\begin{equation} \label{eq:g}
 g(z)=2e^{-|z|^2}
 \left\{e^{-2iY}\cos\bigl(2(X+Z)\bigr)
              +i e^{2iY}\cos\bigl(2(X-Z)\bigr)\right\}.
\end{equation}
Then $f,g\in\mathcal S(\Cn)$ are nonradial and satisfy
$|f(z)|,|g(z)|\leq4e^{-|z|^2}$. The operators satisfy
$$
 T_fT_g=0,\quad T_f\ne0,\quad T_g\ne0
 \quad\hbox{on }F^2(\Cn).
$$
\end{theorem}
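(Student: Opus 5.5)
The plan is to expand each symbol as a finite linear combination of complex Gaussians of the form $e^{-z^*Mz}$ with $M=I+iH$, $H$ Hermitian. Each such Gaussian acts on $F^2(\Cn)$ as an explicit weighted composition operator. The relation $T_fT_g=0$ then reduces to a finite identity among $2\times2$ matrices.

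\textbf{Step 1: Hermitian forms.} Write $z'=(z_1,z_2)^T$ and let $\sigma_1,\sigma_2,\sigma_3$ be the Pauli matrices. Then $X=z'^*\sigma_3z'$ and $Z=z'^*\sigma_1z'$, and $Y=\pm z'^*\sigma_2z'$, with the sign fixed by the convention for $\operatorname{Im}(\overline z_1z_2)$. Expanding $\sin$ and $\cos$ through exponentials gives
$$f(z)=\sum_{j=1}^4 a_j e^{-z^*(I+iH_j)z},\qquad g(z)=\sum_{k=1}^4 b_k e^{-z^*(I+iH'_k)z}.$$
Each $H_j,H'_k$ is $\pm2\sigma_2\pm2(\sigma_3\mp\sigma_1)$ (extended by a zero block when $n>2$) with explicit constants $a_j,b_k$. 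For instance $e^{-2iY}\sin(2(X-Z))=\frac1{2i}\bigl(e^{-2iY+2i(X-Z)}-e^{-2iY-2i(X-Z)}\bigr)$. The bounds $|f|,|g|\le4e^{-|z|^2}$ follow from the triangle inequality, since $X,Y,Z$ are real.

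For the Schwartz property, every real derivative of $e^{-|z|^2}e^{i\,\text{(real quadratic)}}$ equals $e^{-|z|^2}$ times a polynomial, so these symbols lie in $\mathcal S(\Cn)$. For nonradiality, I will compare $f$ and $g$ at two points of the same modulus, for example $z=(r,0,\dots)$ and $z=(re^{i\theta}/\sqrt2,\,r/\sqrt2,\dots)$. These points give different values of $(X,Y,Z)$, and hence different values of the symbols.

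\textbf{Step 2: Gaussian kernel formula.} Let $B=2I+iH$, whose Hermitian part $2I$ is positive. By \eqref{E4} and the standard complex Gaussian integral,
$$\bigl(T_{e^{-z^*(I+iH)z}}K_a\bigr)(z)=\pi^{-n}\int_{\Cn}e^{-w^*Bw+w^*z+a^*w}\,dV(w)=\det(B)^{-1}e^{a^*B^{-1}z}=\det(B)^{-1}K_a(B^{-1}z).$$
I will justify this by analytic continuation in $H$ from real diagonal cases. Kernels span a dense set and the operators are bounded, so
$$T_{e^{-z^*(I+iH)z}}\,h=\det(B)^{-1}\,h\circ B^{-1}.$$
Consequently
$$T_fT_g\,h=\sum_{j,k}\frac{a_jb_k}{\det B_j\det B'_k}\;h\circ\bigl(B_k'^{-1}B_j^{-1}\bigr).$$

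\textbf{Step 3: Cancellation (main obstacle).} Applying the sum to $h=K_a$ for all $a$, it vanishes if and only if the coefficients attached to each distinct matrix $B_k'^{-1}B_j^{-1}$ sum to zero. This holds because exponentials $e^{a^*Cz}$ with distinct $C$ are linearly independent.

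All $B$'s are $2I+i(\alpha\sigma_1+\beta\sigma_2+\gamma\sigma_3)$ on the first block. Using $(2I+iv\cdot\sigma)^{-1}=(2I-iv\cdot\sigma)/(4+|v|^2)$ and the Pauli product rule $(u\cdot\sigma)(v\cdot\sigma)=(u\cdot v)I+i(u\times v)\cdot\sigma$, I will compute the $16$ products. They should pair up into equal matrices: the cross-product term must match pairwise, which is presumably why the sign patterns $X\pm Z$ and $\pm Y$ were chosen. I then check that the paired coefficients, which involve the factors $i$ in \eqref{eq:f}--\eqref{eq:g} and equal determinants $\det B=4+|v|^2$ (all equal to $4+12=16$ here), cancel exactly. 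I expect this bookkeeping to be the crux. If exact pairing fails, I would fall back to grouping products by their matrix and summing the coefficients within each group.

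\textbf{Step 4: Nonvanishing.} $T_f$ is a combination of four composition operators with distinct matrices $B_j^{-1}$, and the nonzero coefficients $a_j/\det B_j$ do not cancel. By the same linear independence of $a\mapsto e^{a^*B_j^{-1}z}$, we get $T_fK_a\ne0$ for generic $a$, so $T_f\ne0$. The same argument gives $T_g\ne0$.
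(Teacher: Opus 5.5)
This is essentially the paper's proof in Pauli notation. Your $B=2I+iH$ equals its $M_U=\diag(4U^*,2I_{n-2})$, since the paper's $E_1,E_2,E_3$ are $i\sigma_3,i\sigma_2,i\sigma_1$, so your kernel formula is Lemma~\ref{lem:gaussian} with $B^{-1}=D_U$. The cancellation you defer is Table~\ref{tab:cancellations}: the product $I/16$ comes from the four pairs with opposite sign vectors (coefficients $i,-i,-i,i$), and each of the six products $\pm i\sigma_\ell/16$ comes from two pairs, so the pairwise cancellation you expect does hold.

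Your linear-independence argument for $T_f\ne0$ is a valid alternative to the paper's explicit evaluation of $(T_fK_{a_0})(a_0)$; note that $T_fK_0=0$, which is why a generic $a$ is needed. For nonradiality, however, you must pick the points concretely, because distinct $(X,Y,Z)$ alone does not force distinct symbol values. For example, take $|z|^2=\pi/8$ with $(X,Y,Z)=(\pi/8,0,0)$, where $f,g$ are nonzero, versus $(0,\pi/8,0)$, where both vanish.
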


The symbols in Theorem \ref{thm:counterexample} belong to
$\mathcal D_0$ and have Gaussian decay. When $n\ge2$, the radiality
assumption in Bauer and Le's result cannot be replaced by smoothness
and rapid decay, even for bounded symbols.
Theorem \ref{thm:counterexample} gives a negative answer to
Problem \ref{prob:zero-product} already for $N=2$ on
$F^2(\Cn)$ when $n\ge2$. Both symbols are bounded, nonradial
and Schwartz. In the proof of  Theorem \ref{thm:counterexample}, we choose Gaussian
symbols whose Toeplitz operators send each kernel to a scalar
multiple of another kernel. Composition of these operators is
determined by ordered matrix products, so the construction reduces to
cancellation among terms with the same ordered matrix product.

The construction in Theorem \ref{thm:counterexample} uses the
two coordinates $z_1,z_2$ in \eqref{eq:quadratic-forms}, so it
requires $n\ge2$. On the complex line $z=(z_1,0,\ldots,0)^T$,
we have $X=|z_1|^2$ and $Y=Z=0$, and the restrictions of both
symbols depend only on $|z_1|$. To obtain nonradial symbols
when $n=1$, we use a different construction.

\begin{theorem}\label{thm:moment-example}
Let $n=1$ and $c\in(1/2,1)$. There are nonzero smooth nonradial
functions $f,g\in\mathcal D_c$ whose extended Toeplitz operators
in $F^2(\mathbb C)$ are nonzero. Their domains contain the space
$\mathcal P=\mathbb C[z]$ of all holomorphic polynomials on
$\mathbb C$. Moreover,
\begin{equation}\label{eq:moment-product}
 \mathcal P\subset\mathcal D(\widetilde T_f\widetilde T_g),
 \quad
 \widetilde T_f\widetilde T_g p=0
 \quad\hbox{in }F^2(\mathbb C)\quad(p\in\mathcal P).
\end{equation}
When $c\ge3/4$, the symbol $g$ can be chosen bounded.
\end{theorem}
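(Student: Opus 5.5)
Throughout, $\widetilde T_u$ is Bauer and Le's extension \cite[Definition 4.1]{BL}, recalled in Section \ref{sec:moment-example}.

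The plan is to work with monomials and reduce everything to moment sequences. For $u\in\mathcal D_c$ with $c<1$ and a monomial $z^k$, the function $u(w)w^k e^{z\overline w}e^{-|w|^2}$ is integrable, so \eqref{E4} makes sense. Expanding $u$ in angular Fourier modes, $u(re^{i\theta})=\sum_m u_m(r)e^{im\theta}$, the $m$-th mode sends $z^k$ to a multiple of $z^{k+m}$; the multiple is
\[
\frac{2}{(k+m)!}\int_0^\infty u_m(r)r^{2k+m+1}e^{-r^2}\,dr .
\]
Only $m\ge -k$ contributes.

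\textbf{Choice of symbols.} I would take each symbol to be a finite sum of two modes. Let $g=g_0(|z|)+g_1(|z|)e^{i\theta}$ and $f=f_0(|z|)+f_{-1}(|z|)e^{-i\theta}$, with smooth profiles of growth at most $Ce^{cr^2}$. On monomials these act as
\[
\widetilde T_g z^k=a_kz^k+b_kz^{k+1},\qquad \widetilde T_f z^j=\alpha_jz^j+\beta_jz^{j-1}.
\]
The zero-product condition then becomes, for every $k$,
\[
\alpha_k a_k+\beta_{k+1}b_k=0,\qquad \alpha_{k+1}b_k=0.
\]
The second relation forces unpleasant vanishing, so a cleaner design is to let $f$ annihilate the range of $\widetilde T_g$ on polynomials.

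Concretely: choose $g$ so that $\widetilde T_g z^k=a_k(z^k-\lambda_k z^{k+1})$, and choose $f$ with $\widetilde T_f(z^k)=\alpha_k z^k$ and $\widetilde T_f(z^{k+1})=\alpha_k\lambda_k^{-1}z^k$ on the relevant degrees. If that is too rigid, I would instead make $\widetilde T_g$ map into a fixed sparse subspace, for instance the multiples of $z^2$ (with $a_0=a_1=0$), and let $f$ be a radial-plus-mode symbol that kills that subspace. The cleanest route is to let $f$ have a single mode, $f=f_m(r)e^{im\theta}$. Then $\widetilde T_f z^j=\gamma_j z^{j+m}$, and $\gamma_j=0$ exactly when a Mellin-type moment of $f_m$ vanishes. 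Choose $f_m$ whose moments $\int f_m r^{2j+m+1}e^{-r^2}dr$ vanish for all even $j$ but not all odd $j$, and choose $g=g_1(r)e^{i\theta}$ so that $\widetilde T_g$ maps odd powers to even powers and kills even powers. That gives $\widetilde T_f\widetilde T_g=0$ on $\mathcal P$, with both operators nonzero and both symbols nonradial.

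\textbf{Key step: prescribed vanishing of moments.} I need smooth $h$ with $|h(r)|\le Ce^{cr^2}$ such that $M(s)=\int_0^\infty h(r)r^{s}e^{-r^2}\,dr$ vanishes on one arithmetic progression of $s$ but not identically. Theta functions and the Fourier transform enter here. A function like
\[
h(r)=e^{cr^2}\cos(\kappa r^2)\quad\text{or}\quad e^{(1-\tau)r^2}\ \text{with complex }\tau,
\]
has moments expressible through $\Gamma(s)\tau^{-s}$. Taking real parts $\operatorname{Re}\bigl(\tau^{-s}\bigr)=|\tau|^{-s}\cos(s\arg\tau)$, these vanish on an arithmetic progression when $\arg\tau=\pi/(2d)$. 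The growth is $e^{(1-\operatorname{Re}\tau)r^2}$, so I need $\operatorname{Re}\tau\ge 1-c$. Balancing this against $\arg\tau$ and $|\tau|$ (where $|\tau|$ can be scaled freely) is where the range $c>1/2$ should come from, since it corresponds to $\arg\tau=\pi/4$. Boundedness of $g$ when $c\ge3/4$ should come from a similar but tighter parameter choice: allow $\operatorname{Re}\tau=1$ for $g$ while $f$ absorbs the growth. Theta-function sums over lattice shifts can be used to fix the residues of $s$ modulo a period if a single Gaussian is insufficient.

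\textbf{Remaining verifications.} I must check that $\mathcal P$ lies in $\mathcal D(\widetilde T_f\widetilde T_g)$. Since $\widetilde T_g$ maps monomials to monomials, this reduces to the moment integrals converging, which holds because $c<1$. Smoothness at the origin needs $h_m(r)e^{im\theta}$ to be smooth, which I would ensure by a factor $r^{|m|}$ or a cutoff near $0$. Correcting the vanishing moments after such a cutoff is the main obstacle. My first attempt would be to use profiles already of the form $r^{|m|}\times(\text{complex Gaussian})$, which are smooth at the origin automatically.
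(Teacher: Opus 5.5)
Your proposal has a genuine gap at exactly the step you call key, and it cannot be repaired inside the monomial-to-monomial framework you set up. In your ``cleanest route'', both $g=g_1(r)e^{i\theta}$ (killing even powers) and $f=f_m(r)e^{im\theta}$ (killing even powers) need a nonzero profile $h$ with $|h(r)|\le Ce^{cr^2}$, $c<1$, such that $\int_0^\infty h(r)r^{2j+m+1}e^{-r^2}\,dr=0$ for all even $j\ge j_0$. No such $h$ exists. Put $t=r^2$ and $\Phi(t)=h(\sqrt t)\,t^{j_0+m/2}e^{-t}$. Then the conditions read $\int_0^\infty\Phi(t)t^{2l}\,dt=0$ for all $l\ge0$, and $|\Phi(t)|\le C(1+t)^Ne^{-(1-c)t}$. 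The even extension $G(t)=\Phi(|t|)$ therefore has all moments zero and decays like $e^{-\epsilon|t|}$. So $\widehat G$ is holomorphic in the strip $|\operatorname{Im}\xi|<\epsilon$ and has vanishing Taylor series at $0$. Hence $G\equiv0$ and $h=0$. This argument covers every $h$ in the growth class, so no superposition of complex Gaussians or theta sums can rescue it.

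Your own Gaussian computation shows where the boundary lies. For $h=e^{(1-\tau)r^2}$ the moment is $\frac12\Gamma(j+\frac m2+1)\tau^{-(j+m/2+1)}$. Its real part vanishes along a step-$2$ progression in $j$ only when $\arg\tau=\pm\pi/2$, i.e.\ $\operatorname{Re}\tau=0$. That is growth $e^{|z|^2}$, the endpoint $c=1$ of Bauer and Le's radial example. The choice $\arg\tau=\pi/4$ gives step-$4$ progressions, and since $|\tau|$ is free it places no constraint on $c$, so it does not explain $c>1/2$. Your two-mode variant leads to the same kind of requirement: $\alpha_{k+1}b_k=0$ for all $k$ asks two such moment sequences to have complementary zero sets, and you give no way to achieve that.

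What is missing is a way to destroy the Gaussian decay of the integrand. As long as $\widetilde T_g$ maps monomials to monomials, $f\,(\widetilde T_g z^k)\,e^{-|w|^2}$ decays like $e^{-(1-c)|w|^2}$, and you are fighting a uniqueness theorem of the kind above. The paper instead makes $\widetilde T_g p=\gamma^{-1}\psi\,p(\cdot/\gamma)$ non-polynomial. Here $\psi(z)=e^{\sigma z^2}\vartheta_1(\pi z/L)$ has growth $e^{\sigma|z|^2}$ with $\sigma=1-c$. The theta function is used so that $v=e^{-2\sigma|w|^2}|\psi|^2$ is periodic with respect to the lattice $\Gamma$, not to select residue classes of exponents. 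The paper takes $f=we^{(1-2\sigma)|w|^2}\overline{\psi}\,r$, with $r$ band-limited ($\widehat r=\rho$, $\operatorname{supp}\rho\subset\{|\nu|<\pi/(2L)\}$). The integrand then becomes $\gamma^{-1}wvr\,p(w/\gamma)$, which is Schwartz but not Gaussian-decaying. The Fourier series of $v$ gives $\widehat{vr}=v_0$ near $0$, so every moment $\int wvr\,w^m\overline{w}^j\,dA$ vanishes. In that construction, $c>1/2$ comes from needing $\widetilde T_g1=\gamma^{-1}\psi\in F^2(\mathbb C)$, i.e.\ $\sigma<1/2$. The bound $|g(w)|\le Ce^{(1-1/(4\sigma))|w|^2}$ then makes $g$ bounded exactly when $c\ge3/4$.
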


Theorem \ref{thm:moment-example} uses the formulation on
holomorphic polynomials preceding Question A in
\cite[Section 6]{BL}. In Section \ref{sec:moment-example},
we use a Jacobi theta function to construct an entire function
whose Gaussian weighted modulus is periodic. A smooth cutoff in the Fourier variables then yields
integral identities that make the operator product vanish.

In this paper, we consider the zero-product problem for Toeplitz operators on the Fock space. The Theorems \ref{thm:counterexample} and
\ref{thm:moment-example} resolve Bauer and Le's question
in every complex dimension.
For $n\ge2$, they also give a negative answer to
Problem \ref{prob:zero-product} on $F^2(\Cn)$ for every $N\ge2$.
The case $N=2$ is given by Theorem \ref{thm:counterexample},
and further factors may have the constant symbol $1$.
For $n=1$, Theorem \ref{thm:moment-example} gives examples
with $g$ bounded when $c\ge3/4$, but does not give two bounded
symbols. Whether two nonzero bounded symbols can have a zero
Toeplitz product on $F^2(\mathbb C)$ remains open.

For the matrix calculations below, we write $I_d$ for the
$d\times d$ identity matrix and $\diag$ for the diagonal or
block diagonal matrix with the indicated entries or blocks.
For $u,v\in\mathbb C^d$, we use the same inner product convention
$u\cdot v=v^*u$.

\section{Gaussian symbols}

We use the following special case of the complex Gaussian
integral formula \cite[Proposition III.2]{JohnParthasarathy2021}.
For $\rho,\tau,\lambda\in\mathbb C$ with
$\operatorname{Re}\lambda>0$,
\begin{equation}\label{eq:scalar}
  \pi^{-1}\int_{\mathbb C}
  e^{-\lambda|w|^2+\rho w+\tau\overline w}\dd A(w)
  =\lambda^{-1}e^{\rho\tau/\lambda},
\end{equation}
where $dA$ denotes area measure on $\mathbb C$.
After a unitary change of variables, \eqref{eq:scalar} gives
the action of the Toeplitz operators with the following Gaussian
symbols on reproducing kernels. In the block notation below,
when $n=2$, vectors in $\mathbb C^{n-2}$ have no coordinates and
all blocks of size $n-2$ are omitted. The squared norms and inner products of these vectors are zero,
and determinants of empty blocks equal $1$.

\begin{lemma}\label{lem:gaussian}
Let $n\ge2$, and let $U$ be a $2\times2$ unitary matrix satisfying
$U+U^*=I_2$ and $\det U=1$. Write
$$
 z=\begin{pmatrix}\xi\\z'\end{pmatrix},\quad
 \xi=(z_1,z_2)^T\in\mathbb C^2,\quad
 z'=(z_3,\ldots,z_n)^T\in\mathbb C^{n-2}.
$$
Set
\begin{equation}\label{eq:gaussian-symbol}
\sigma_U(z)=\exp\bigl(((I_2-4U^*)\xi)\cdot\xi-|z'|^2\bigr)
\end{equation}
and
\begin{equation}\label{eq:D-definition}
  D_U=\diag(U/4,I_{n-2}/2),\quad \kappa_n=2^{-n-2}.
\end{equation}
Then $|\sigma_U(z)|=e^{-|z|^2}$ and, for every $a,z\in\Cn$,
\begin{equation}\label{eq:kernel-gaussian}
 (T_{\sigma_U}K_a)(z)
 =\kappa_n e^{(D_Uz)\cdot a}
 =\kappa_n K_{D_U^*a}(z).
\end{equation}
\end{lemma}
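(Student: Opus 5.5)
The plan is to verify the modulus identity by a direct computation using $U+U^*=I_2$. For the kernel formula, I will write $T_{\sigma_U}K_a$ through \eqref{E4} as a Gaussian integral over $\Cn$ and diagonalize $U$ unitarily. This splits the integral into $n$ one-variable integrals, each evaluated by \eqref{eq:scalar}.

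\emph{Step 1 (modulus).} With $u\cdot v=v^*u$ we have $((I_2-4U^*)\xi)\cdot\xi=|\xi|^2-4\,\xi^*U^*\xi$. Since $\operatorname{Re}(\xi^*U^*\xi)=\tfrac12\xi^*(U+U^*)\xi=\tfrac12|\xi|^2$, the real part of the exponent in \eqref{eq:gaussian-symbol} is $|\xi|^2-2|\xi|^2-|z'|^2=-|z|^2$. Hence $|\sigma_U(z)|=e^{-|z|^2}$. In particular $\sigma_U$ is bounded, $T_{\sigma_U}$ is bounded, and \eqref{E4} applies to $f=K_a$.

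\emph{Step 2 (spectral form of $U$).} $U$ is unitary, hence normal. Its eigenvalues $\lambda_1,\lambda_2$ have modulus $1$, and they satisfy $\lambda_1+\lambda_2=\operatorname{tr}U=1$ (from $U+U^*=I_2$, since $\operatorname{Re}\lambda_j=1/2$) and $\lambda_1\lambda_2=\det U=1$. So $\{\lambda_1,\lambda_2\}=\{e^{i\pi/3},e^{-i\pi/3}\}$. Write $U=V\diag(\lambda_1,\lambda_2)V^*$ with $V$ unitary.

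\emph{Step 3 (integral).} Write $w=(\eta,w')$ and $a=(a_\xi,a')$. By \eqref{E4},
$$
(T_{\sigma_U}K_a)(z)=\pi^{-n}\int_{\Cn}\exp\bigl(-4\eta^*U^*\eta-2|w'|^2+a_\xi^*\eta+\eta^*\xi+a'^*w'+w'^*z'\bigr)\dd V(w).
$$
I substitute $\eta=V\zeta$, which preserves Lebesgue measure, and set $b=V^*a_\xi$ and $c=V^*\xi$. The integrand then factors into one-variable integrals. The $j$-th one has $\lambda=4\overline{\lambda_j}$ (with $\operatorname{Re}\lambda=2>0$), $\rho=\overline{b_j}$ and $\tau=c_j$. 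The $w'$ integrals have $\lambda=2$. Fubini is justified by the absolute integrability already established in Step 1. By \eqref{eq:scalar}, the product of the $\lambda^{-1}$ factors is
$$
(16\,\overline{\lambda_1\lambda_2})^{-1}\,2^{-(n-2)}=2^{-n-2}=\kappa_n .
$$
Using $1/\overline{\lambda_j}=\lambda_j$, the exponent becomes
$$
\tfrac14\sum_j\lambda_j\overline{b_j}c_j+\tfrac12 a'^*z' \;=\; a_\xi^*(U/4)\xi+a'^*(z'/2) \;=\;(D_Uz)\cdot a .
$$
Finally, $(D_Uz)\cdot a=z\cdot(D_U^*a)$, which gives $\kappa_nK_{D_U^*a}(z)$.

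The only delicate point is bookkeeping. I need to track which linear term plays the role of $\rho$ (coefficient of $w$) and which plays $\tau$ (coefficient of $\overline w$) under the convention $u\cdot v=v^*u$. I also need to confirm that the eigenvalue constraint yields exactly $\overline{\lambda_1\lambda_2}=1$ and $1/\overline{\lambda_j}=\lambda_j$, so that the matrix reassembles to $U/4$ rather than $U^*/4$. The case $n=2$ is the same computation with the $w'$ factors omitted.
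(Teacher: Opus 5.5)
Your proposal is correct and follows essentially the same route as the paper. Both compute $|\sigma_U|$ from the real part of the exponent, then unitarily diagonalize the quadratic form so the Gaussian integral splits into one-variable integrals evaluated by \eqref{eq:scalar}. The differences are cosmetic: you diagonalize $U$ itself and use $|\lambda_j|=1$, $\operatorname{Re}\lambda_j=1/2$, $\lambda_1\lambda_2=1$. The paper instead diagonalizes $M_U=\diag(4U^*,2I_{n-2})$, reassembles via $\det M_U=2^{n+2}$ and $M_U^{-1}=D_U$, and proves absolute convergence by completing the square rather than via boundedness of $\sigma_U$.
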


\begin{proof}
To compute the modulus of $\sigma_U$, set $A=I_2-4U^*$.
Using $A^*=I_2-4U$, $\overline{\xi^*A\xi}=\xi^*A^*\xi$ and
$U+U^*=I_2$, we obtain
$$
\begin{aligned}
 \operatorname{Re}\bigl((A\xi)\cdot\xi\bigr)
 &=\frac12\bigl(\xi^*A\xi+\xi^*A^*\xi\bigr)\\
 &=\xi^*\bigl(I_2-2(U+U^*)\bigr)\xi\\
 &=-\xi^*\xi=-|\xi|^2.
\end{aligned}
$$
Substituting this real part into \eqref{eq:gaussian-symbol} gives
$$
\begin{aligned}
 |\sigma_U(z)|
 &=\exp\bigl(\operatorname{Re}((A\xi)\cdot\xi)-|z'|^2\bigr)\\
 &=\exp\bigl(-|\xi|^2-|z'|^2\bigr)=e^{-|z|^2}.
\end{aligned}
$$
This bound also shows that $T_{\sigma_U}$ is bounded on
$F^2(\Cn)$, so it acts on every kernel $K_a$.

To compute this action, we combine the symbol with the Gaussian
weight by setting $M_U=\diag(4U^*,2I_{n-2})$.
The assumptions on $U$, together with \eqref{eq:D-definition}, give
\begin{equation}\label{eq:matrix-data}
 M_U^{-1}=D_U,\quad
 \det M_U=4^2\det(U^*)\,2^{n-2}=2^{n+2},\quad
 \frac{M_U+M_U^*}{2}=2I_n.
\end{equation}
Here the inverse uses $U^{-1}=U^*$, and the determinant uses
$\det(U^*)=\overline{\det U}=1$. To express the weighted symbol
in terms of $M_U$, write
$$
 w=\begin{pmatrix}\eta\\w'\end{pmatrix},\quad
 \eta=(w_1,w_2)^T\in\mathbb C^2,\quad
 w'=(w_3,\ldots,w_n)^T\in\mathbb C^{n-2}.
$$
With $|w|^2=|\eta|^2+|w'|^2$, the definition
\eqref{eq:gaussian-symbol} yields
$$
\begin{aligned}
 \sigma_U(w)e^{-|w|^2}
 &=\exp\bigl(((I_2-4U^*)\eta)\cdot\eta
             -|w'|^2-|\eta|^2-|w'|^2\bigr)\\
 &=\exp\bigl(|\eta|^2-4(U^*\eta)\cdot\eta
             -|\eta|^2-2|w'|^2\bigr)\\
 &=\exp\bigl(-4(U^*\eta)\cdot\eta-2|w'|^2\bigr).
\end{aligned}
$$
The quadratic form in the last exponent is determined by $M_U$,
because its block form gives
$$
 (M_Uw)\cdot w
 =4(U^*\eta)\cdot\eta+2|w'|^2.
$$
Combining these expressions for the weighted symbol and applying
\eqref{E4} to $K_a$, we obtain
\begin{align}\label{eq:gaussian-integral}
 (T_{\sigma_U}K_a)(z)
 &=\pi^{-n}\int_{\Cn}
 \sigma_U(w)e^{w\cdot a+z\cdot w-|w|^2}\dd V(w)\nonumber\\
 &=\pi^{-n}\int_{\Cn}
 e^{-(M_Uw)\cdot w+w\cdot a+z\cdot w}\dd V(w).
\end{align}

To evaluate \eqref{eq:gaussian-integral} by separating the
coordinates, we first verify absolute convergence. Fix
$a,z\in\Cn$ and put $L=|a|+|z|$. The Hermitian part in
\eqref{eq:matrix-data} controls the quadratic term through
$$
 \operatorname{Re}\bigl((M_Uw)\cdot w\bigr)
 =w^*\frac{M_U+M_U^*}{2}w
 =2w^*w=2|w|^2.
$$
The remaining linear terms satisfy
$\operatorname{Re}(w\cdot a+z\cdot w)\le L|w|$.
To absorb this bound into the Gaussian decay, put $r=|w|$ and
complete the square to obtain
$$
 -2r^2+Lr
 =-r^2-\left(r-\frac L2\right)^2+\frac{L^2}{4}
 \leq-r^2+\frac{L^2}{4}.
$$
Combining the quadratic and linear bounds gives
\begin{align}\label{E17}
 \left|e^{-(M_Uw)\cdot w+w\cdot a+z\cdot w}\right|
 &=e^{-2|w|^2+\operatorname{Re}(w\cdot a+z\cdot w)}\nonumber\\
 &\leq e^{-2|w|^2+L|w|}
 \leq e^{L^2/4}e^{-|w|^2}.
\end{align}
The majorant in \eqref{E17} has integral $\pi^ne^{L^2/4}$
over $\Cn$, which proves absolute convergence of
\eqref{eq:gaussian-integral} for every fixed $a,z$.

We next diagonalize $M_U$ by a unitary change of coordinates.
The identities $U^*U=UU^*=I_2$ show that $M_U$ is normal, since
$$
 M_UM_U^*
 =\diag(16U^*U,4I_{n-2})
 =\diag(16I_2,4I_{n-2})
 =M_U^*M_U.
$$
The spectral theorem therefore gives a unitary matrix $Q$ such that
$$
 Q^*M_UQ=\Lambda,\quad
 \Lambda=\diag(\lambda_1,\ldots,\lambda_n).
$$
To check the hypothesis of \eqref{eq:scalar} for the diagonal
entries, we take Hermitian parts and use \eqref{eq:matrix-data} to obtain
$$
 \diag(\operatorname{Re}\lambda_1,\ldots,
       \operatorname{Re}\lambda_n)
 =\frac{\Lambda+\Lambda^*}{2}
 =Q^*\frac{M_U+M_U^*}{2}Q
 =2I_n.
$$
Then $\operatorname{Re}\lambda_\ell=2>0$ for every $\ell$, as
required in \eqref{eq:scalar}. In particular, each $\lambda_\ell$
is nonzero. We use the corresponding coordinates
$v=Q^*w$, $b=Q^*a$ and $y=Q^*z$ in $\Cn$, so that
$w=Qv$, $a=Qb$ and $z=Qy$. For $1\leq\ell\leq n$,
the entries $v_\ell$, $b_\ell$ and $y_\ell$ are the $\ell$th
coordinates of the column vectors $v$, $b$ and $y$.
The unitary change of coordinates preserves Lebesgue measure
and the Hermitian inner product, so we obtain 
$$
 (M_Uw)\cdot w=\sum_{\ell=1}^n\lambda_\ell|v_\ell|^2,
 \quad
 w\cdot a=\sum_{\ell=1}^n v_\ell\overline{b_\ell},
 \quad
 z\cdot w=\sum_{\ell=1}^n y_\ell\overline{v_\ell}.
$$
With these expressions, Fubini's theorem and the integrable
majorant \eqref{E17} factor \eqref{eq:gaussian-integral} into
one integral for each coordinate. Applying \eqref{eq:scalar}
with $\lambda=\lambda_\ell$, $\rho=\overline{b_\ell}$ and
$\tau=y_\ell$ to each factor gives
\begin{align*}
 (T_{\sigma_U}K_a)(z)
 &=\prod_{\ell=1}^n\left\{
   \pi^{-1}\int_{\mathbb C}
   e^{-\lambda_\ell|v_\ell|^2+v_\ell\overline{b_\ell}
                         +y_\ell\overline{v_\ell}}\dd A(v_\ell)
                          \right\}\\
 &=\left(\prod_{\ell=1}^n\lambda_\ell^{-1}\right)
      \exp\left(\sum_{\ell=1}^n
                      \frac{y_\ell\overline{b_\ell}}{\lambda_\ell}
          \right).
\end{align*}

To return to the original coordinates, note that
$\prod_{\ell=1}^n\lambda_\ell=\det M_U$ and
$M_U^{-1}=Q\Lambda^{-1}Q^*$. The latter identity rewrites the exponent as
\begin{align*}
 \sum_{\ell=1}^n\lambda_\ell^{-1}y_\ell\overline{b_\ell}
 &=b^*\Lambda^{-1}y\\
 &=(Q^*a)^*\Lambda^{-1}(Q^*z)\\
 &=a^*Q\Lambda^{-1}Q^*z
 =a^*M_U^{-1}z=(M_U^{-1}z)\cdot a.
\end{align*}
Substituting this exponent and using \eqref{eq:matrix-data}
and \eqref{eq:D-definition}, we obtain
$$
 (T_{\sigma_U}K_a)(z)
 =\frac1{\det M_U}\exp\bigl((M_U^{-1}z)\cdot a\bigr)
 =\kappa_n e^{(D_Uz)\cdot a}.
$$
Finally, $(D_Uz)\cdot a=z\cdot(D_U^*a)$, so the last expression
is also $\kappa_n K_{D_U^*a}(z)$, as asserted in
\eqref{eq:kernel-gaussian}.
\end{proof}

\section{Proof of Theorem \ref{thm:counterexample}}

Throughout this section, let $n\ge2$. We first construct the matrices required by Lemma \ref{lem:gaussian}
and identify the resulting Gaussian sums with the symbols in the theorem.
We then verify the matrix cancellations and use them to compute the
Toeplitz product.

Set
\begin{equation}\label{eq:E-definition}
 E_1=\begin{pmatrix}i&0\\0&-i\end{pmatrix},\quad
 E_2=\begin{pmatrix}0&1\\-1&0\end{pmatrix},\quad
 E_3=\begin{pmatrix}0&i\\i&0\end{pmatrix}.
\end{equation}
These matrices satisfy $E_j^*=-E_j$, and direct multiplication gives
\begin{equation}\label{eq:E-table}
 \begin{array}{c|rrr}
       &E_1&E_2&E_3\\\hline
 E_1&-I_2&E_3&-E_2\\
 E_2&-E_3&-I_2&E_1\\
 E_3&E_2&-E_1&-I_2
 \end{array}
\end{equation}
The entry in row $E_i$ and column $E_j$ is the product $E_iE_j$.
For example, the table gives $E_1E_2=E_3$ and $E_2E_1=-E_3$.
Its diagonal entries give $E_j^2=-I_2$, and the entries outside
the diagonal give $E_jE_k+E_kE_j=0$ for $j\ne k$.
These identities determine the square of the following matrix.
For a choice of signs $\varepsilon=(\varepsilon_1,\varepsilon_2,
\varepsilon_3)\in\{-1,1\}^3$, set
\begin{equation}\label{eq:W-definition}
 S_\varepsilon=\varepsilon_1E_1+\varepsilon_2E_2
                         +\varepsilon_3E_3
 \quad\text{and}\quad
 W(\varepsilon_1,\varepsilon_2,\varepsilon_3)
       =\tfrac12(I_2+S_\varepsilon).
\end{equation}
With $W=W(\varepsilon_1,\varepsilon_2,\varepsilon_3)$,
\eqref{eq:E-definition} and \eqref{eq:W-definition} give
\begin{equation}\label{eq:W-entries}
 W=\frac12\begin{pmatrix}
       1+i\varepsilon_1&\varepsilon_2+i\varepsilon_3\\
       -\varepsilon_2+i\varepsilon_3&1-i\varepsilon_1
     \end{pmatrix}.
\end{equation}
To use these matrices in the construction, we verify the
hypotheses of Lemma \ref{lem:gaussian} and compute the corresponding
Gaussian symbols.

\begin{lemma}\label{lem:matrix-family}
For every $\varepsilon\in\{-1,1\}^3$, the matrix
$W=W(\varepsilon_1,\varepsilon_2,\varepsilon_3)$ is unitary and satisfies
$W+W^*=I_2$ and $\det W=1$. With $X,Y,Z$ as in
\eqref{eq:quadratic-forms}, its Gaussian symbol satisfies
\begin{equation}\label{eq:phase}
 \sigma_{W(\varepsilon_1,\varepsilon_2,\varepsilon_3)}(z)
 =e^{-|z|^2}e^{2i(\varepsilon_1X+\varepsilon_2Y+
                                      \varepsilon_3Z)}.
\end{equation}
\end{lemma}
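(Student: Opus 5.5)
The plan is to get everything from the quaternion-type identities in \eqref{eq:E-table}, using $S=S_\varepsilon$ for short. Since $E_j^*=-E_j$, we have $S^*=-S$, and hence $W^*=\tfrac12(I_2-S)$. This immediately gives $W+W^*=I_2$. For unitarity we compute $S^2$. The diagonal entries of the table give $E_j^2=-I_2$, and the anticommutation relations $E_jE_k+E_kE_j=0$ for $j\ne k$ kill the cross terms. Together with $\varepsilon_j^2=1$ this yields
$$S^2=-(\varepsilon_1^2+\varepsilon_2^2+\varepsilon_3^2)I_2=-3I_2.$$
Then
$$W^*W=\tfrac14(I_2-S)(I_2+S)=\tfrac14(I_2-S^2)=I_2,$$
and since $W$ is a $2\times2$ matrix, $WW^*=I_2$ as well. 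For the determinant we read off \eqref{eq:W-entries}:
$$\det W=\tfrac14\bigl[(1+i\varepsilon_1)(1-i\varepsilon_1)-(\varepsilon_2+i\varepsilon_3)(-\varepsilon_2+i\varepsilon_3)\bigr]=\tfrac14[2+(\varepsilon_2^2+\varepsilon_3^2)]=1.$$

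For the Gaussian symbol, we apply \eqref{eq:gaussian-symbol} with $U=W$. This gives
$$I_2-4W^*=I_2-2(I_2-S)=-I_2+2S.$$
Writing $\xi=(z_1,z_2)^T$, we get
$$\bigl((I_2-4W^*)\xi\bigr)\cdot\xi=-|\xi|^2+2\,\xi^*S\xi,$$
and adding the term $-|z'|^2$ produces the factor $e^{-|z|^2}$. It therefore remains to show that
$$\xi^*S\xi=i(\varepsilon_1X+\varepsilon_2Y+\varepsilon_3Z).$$
By linearity it suffices to check each generator separately:
$$\xi^*E_1\xi=i(|z_1|^2-|z_2|^2)=iX,$$
$$\xi^*E_2\xi=\overline z_1z_2-\overline z_2z_1=2i\operatorname{Im}(\overline z_1z_2)=iY,$$
$$\xi^*E_3\xi=i(\overline z_1z_2+\overline z_2z_1)=2i\operatorname{Re}(\overline z_1z_2)=iZ.$$
Combining these gives \eqref{eq:phase}.

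The only real obstacle is keeping the conventions consistent. The paper defines $u\cdot v=v^*u$, so $(A\xi)\cdot\xi=\xi^*A\xi$, and this fixes which off-diagonal entry is conjugated. In particular, the signs in $E_2$ are what make $Y$ come out as $2\operatorname{Im}(\overline z_1z_2)$ rather than its negative. I would double-check this sign directly from the matrix entries in \eqref{eq:E-definition}.
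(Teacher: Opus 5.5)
Your proposal is correct and follows essentially the same route as the paper. Both arguments use $S_\varepsilon^*=-S_\varepsilon$ and $S_\varepsilon^2=-3I_2$ from the multiplication table to get unitarity and $W+W^*=I_2$, then use $I_2-4W^*=-I_2+2S_\varepsilon$ together with the three identities $(E_j\xi)\cdot\xi\in\{iX,iY,iZ\}$ to obtain \eqref{eq:phase}. The only minor difference is that you compute $\det W=1$ explicitly from the entries in \eqref{eq:W-entries}, whereas the paper states it without displaying the calculation.
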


\begin{proof}
From \eqref{eq:E-table}, one can see that
$$
 S_\varepsilon^*=-S_\varepsilon,\quad
 S_\varepsilon^2
 =\sum_{j=1}^3\varepsilon_j^2E_j^2
  +\sum_{j<k}\varepsilon_j\varepsilon_k(E_jE_k+E_kE_j)
 =-3I_2.
$$
Combining these identities with \eqref{eq:W-definition}, we obtain
$$
 WW^*=W^*W=\tfrac14(I_2-S_\varepsilon^2)=I_2,
 \quad W+W^*=I_2,\quad \det W=1.
$$
These identities verify that $W$ satisfies the hypotheses of
Lemma \ref{lem:gaussian} for every choice of signs.

For the column blocks $\xi$ and $z'$ of $z$ from
Lemma \ref{lem:gaussian}, equations
 \eqref{eq:quadratic-forms} and \eqref{eq:E-definition} give
\begin{equation*}
 (E_1\xi)\cdot\xi=i(|z_1|^2-|z_2|^2)=iX,\quad
 (E_2\xi)\cdot\xi=\overline z_1z_2-\overline z_2z_1=iY,
\end{equation*}
and 
\begin{align*}
 (E_3\xi)\cdot\xi&=i(\overline z_1z_2+\overline z_2z_1)=iZ.
\end{align*}
Since $I_2-4W^*=-I_2+2S_\varepsilon$, substitution of these
three identities into \eqref{eq:gaussian-symbol} proves \eqref{eq:phase}.
\end{proof}

We now choose eight matrices from this family to represent
$f$ and $g$ as sums of Gaussian symbols. Each row of the following
array defines two matrices $U_j,V_j$ and their respective scalar
coefficients $c_j,d_j$, where $j\in\{1,2,3,4\}$. The three
arguments of $W$ give the signs multiplying $E_1,E_2,E_3$,
in that order.

\begin{equation}\label{eq:UV}
 \begin{array}{c|cc|cc}
 j&U_j&V_j&c_j&d_j\\\hline
 1&W(-1,-1,-1)&W(1,-1,-1)&1&1\\
 2&W(1,1,-1)&W(-1,1,-1)&i&-i\\
 3&W(1,-1,1)&W(-1,-1,1)&1&-1\\
 4&W(-1,1,1)&W(1,1,1)&i&i
 \end{array}
\end{equation}
For example, the second row gives
\begin{equation*}
 U_2=W(1,1,-1)=\tfrac12(I_2+E_1+E_2-E_3),\quad c_2=i,
\end{equation*}
and
\begin{align*}
 V_2&=W(-1,1,-1)=\tfrac12(I_2-E_1+E_2-E_3),\quad d_2=-i.
\end{align*}
The coefficients $c_j$ and $d_j$ occur in the representations
$$
g=\sum_{j=1}^{4}c_j\sigma_{U_j},
\quad
f=\sum_{j=1}^{4}d_j\sigma_{V_j}.
$$
So, $c_j$ multiplies $\sigma_{U_j}$ in the expression for $g$,
and $d_j$ multiplies $\sigma_{V_j}$ in the expression for $f$.
For example, the second row gives $c_2=i$ and $d_2=-i$,
then the corresponding terms are $i\sigma_{U_2}$ in $g$
and $-i\sigma_{V_2}$ in $f$. The next lemma verifies these
representations and establishes the regularity, bounds and
nonradiality required in Theorem \ref{thm:counterexample}.

\begin{lemma}\label{lem:symbol-properties}
The functions $f,g$ in \eqref{eq:f} and \eqref{eq:g} have the representations
\begin{equation}\label{eq:symbol-sums}
 f=\sum_{k=1}^4d_k\sigma_{V_k},\quad
 g=\sum_{j=1}^4c_j\sigma_{U_j}.
\end{equation}
Then $f,g$ belong to $\mathcal S(\Cn)$, are nonradial even up to equality
almost everywhere, and satisfy
$$
 |f(z)|\leq4e^{-|z|^2},\quad
 |g(z)|\leq4e^{-|z|^2}.
$$
\end{lemma}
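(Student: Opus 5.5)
The plan is to verify the representations \eqref{eq:symbol-sums} by direct substitution of \eqref{eq:phase} from Lemma \ref{lem:matrix-family}. Write $\Phi(\varepsilon)=\varepsilon_1X+\varepsilon_2Y+\varepsilon_3Z$, so that $\sigma_{W(\varepsilon)}=e^{-|z|^2}e^{2i\Phi(\varepsilon)}$. The $V_k$-column of \eqref{eq:UV} pairs $V_1,V_3$ (signs $(1,-1,\mp1)$, coefficients $1,-1$) and $V_2,V_4$ (signs $(\mp1,1,-1)$ and $(1,1,1)$, coefficients $-i,i$). For the first pair,
$$
\sigma_{V_1}-\sigma_{V_3}=e^{-|z|^2}e^{-2iY}\bigl(e^{2i(X-Z)}-e^{-2i(X-Z)}\bigr)=2ie^{-|z|^2}e^{-2iY}\sin\bigl(2(X-Z)\bigr).
$$
For the second pair,
$$
-i\sigma_{V_2}+i\sigma_{V_4}=ie^{-|z|^2}e^{2iY}\bigl(e^{2i(X+Z)}-e^{-2i(X+Z)}\bigr)=-2e^{-|z|^2}e^{2iY}\sin\bigl(2(X+Z)\bigr).
$$
Adding these gives \eqref{eq:f}. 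For $g$, I group $U_1$ with $U_3$, which give $e^{-2iY}e^{\mp2i(X+Z)}$ with coefficients $1$, producing the cosine term. I group $U_2$ with $U_4$, which give $e^{2iY}e^{\pm2i(X-Z)}$ with coefficients $i$, producing $ie^{2iY}\cdot2\cos(2(X-Z))$. This is \eqref{eq:g}. The only care needed is tracking signs, so I will write out each exponent.

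Next come the bound and the Schwartz property. Each $\sigma_{W}$ has modulus $e^{-|z|^2}$ by Lemma \ref{lem:gaussian}, and the coefficients have modulus one, so the triangle inequality gives $|f|,|g|\le4e^{-|z|^2}$. For the Schwartz property, each $\sigma_W(z)=\exp(q(x))$ with $q$ a complex quadratic polynomial in $x\in\mathbb R^{2n}$ and $\operatorname{Re}q=-|x|^2$. Thus $\partial_x^\beta\sigma_W=p_\beta(x)\sigma_W$ with $p_\beta$ a polynomial, which follows by induction on $|\beta|$. It follows that $|x^\alpha\partial_x^\beta\sigma_W|\le|x^\alpha p_\beta(x)|e^{-|x|^2}$, and this is bounded. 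Finite linear combinations stay in $\mathcal S$.

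Nonradiality is the step that needs the most thought, because the claim is required up to a.e.\ equality. Both functions are continuous, so an a.e.-radial continuous function is genuinely radial: averaging over the unitary group gives a continuous radial function equal to it a.e., hence everywhere. So it suffices to exhibit two points with the same $|z|$ and different values. I would restrict to $z=(z_1,z_2,0,\dots)$. On the line $z=(t,0,\dots)$ with $t\ge0$ we have $X=t^2$ and $Y=Z=0$, so
$$
g=2e^{-t^2}(1+i)\cos(2t^2),\qquad f=2e^{-t^2}(i-1)\sin(2t^2).
$$
Rotating in the $(z_1,z_2)$-plane to $z=(t/\sqrt2,\,t/\sqrt2,0,\dots)$ gives $X=Y=0$ and $Z=t^2$, with the same modulus. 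There
$$
g=2e^{-t^2}(1+i)\cos(2t^2),
$$
which coincides with the value on the line, so this choice does not separate points for $g$. I would instead use $z=(t/\sqrt2,\,it/\sqrt2,0,\dots)$, which gives $X=Z=0$ and $Y=\pm t^2$. Then
$$
g=2e^{-t^2}\bigl(e^{\mp2it^2}+ie^{\pm2it^2}\bigr).
$$
Comparing with $(1+i)\cos(2t^2)$ at a convenient $t$, for instance $2t^2=\pi/2$, the first value is $0$ while the second is $e^{-t^2}$ times $2(\mp i\mp i)\neq0$. For $f$ at the same $t$, the line gives $2e^{-t^2}(i-1)\ne0$, while the point with $X=Z=0$ gives $f=0$, since $\sin0=0$. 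Both functions therefore take different values on the sphere $|z|=t$, which completes the plan.
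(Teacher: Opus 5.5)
Your proposal is correct and follows the paper's proof essentially step for step: the same pairing of exponentials into sines and cosines, the same triangle-inequality bound, the same polynomial-times-Gaussian Schwartz argument, and even the same two test points $(t,0,\ldots,0)$ and $(t/\sqrt2,it/\sqrt2,0,\ldots,0)$ for nonradiality. The differences are that you take $2t^2=\pi/2$ where the paper takes $2t^2=\pi/4$, and you add a valid unitary-averaging argument for passing from a.e.\ radial to radial, while two small slips are harmless: $V_3=W(-1,-1,1)$ rather than the sign pattern $(1,-1,1)$ in your parenthetical (your displayed formula already uses the correct one), and at $2t^2=\pi/2$ the second value of $g$ is $2e^{-t^2}(-1-i)$, not $2e^{-t^2}(\mp i\mp i)$, which is still nonzero.
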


\begin{proof}
By \eqref{eq:phase} and \eqref{eq:UV},
\begin{align*}
 &\qquad e^{|z|^2}\sum_{k=1}^4d_k\sigma_{V_k}(z)\\
 &=e^{2i(X-Y-Z)}-i e^{2i(-X+Y-Z)}-e^{2i(-X-Y+Z)}+i e^{2i(X+Y+Z)}\\
 &=e^{-2iY}\bigg\{e^{2i(X-Z)}-e^{-2i(X-Z)}\bigg\}+i e^{2iY}\bigg\{e^{2i(X+Z)}-e^{-2i(X+Z)}\bigg\}\\
 &=2i e^{-2iY}\sin\bigl(2(X-Z)\bigr)
       -2e^{2iY}\sin\bigl(2(X+Z)\bigr).
\end{align*}
For the other sum, \eqref{eq:UV} and \eqref{eq:phase} give
$$
\begin{aligned}
 &\qquad e^{|z|^2}\sum_{j=1}^4c_j\sigma_{U_j}(z)\\
 &=e^{2i(-X-Y-Z)}+i e^{2i(X+Y-Z)}+e^{2i(X-Y+Z)}+i e^{2i(-X+Y+Z)}\\
 &=e^{-2iY}\bigg\{e^{-2i(X+Z)}+e^{2i(X+Z)}\bigg\}+i e^{2iY}\bigg\{e^{2i(X-Z)}+e^{-2i(X-Z)}\bigg\}\\
 &=2e^{-2iY}\cos\bigl(2(X+Z)\bigr)
       +2i e^{2iY}\cos\bigl(2(X-Z)\bigr).
\end{aligned}
$$
Comparing these two expansions with \eqref{eq:f} and \eqref{eq:g},
we obtain \eqref{eq:symbol-sums}.
All coefficients in \eqref{eq:UV} have modulus one, so
\eqref{eq:symbol-sums} and Lemma \ref{lem:gaussian} give the stated estimates.
In particular, $T_f$ and $T_g$ are bounded on $F^2(\Cn)$.

By \eqref{eq:phase}, each Gaussian symbol in
\eqref{eq:symbol-sums} has the form $e^{\Phi(x)}$ in real
coordinates, where
$$
 \Phi(x)=-|x|^2
 +2i\bigl(\varepsilon_1X(x)+\varepsilon_2Y(x)+\varepsilon_3Z(x)\bigr)
$$
for the corresponding signs $\varepsilon\in\{-1,1\}^3$.
Here $X(x),Y(x),Z(x)$ are the quadratic forms in
\eqref{eq:quadratic-forms} written in real coordinates.
Since these forms are real valued, $\Phi$ is a quadratic
polynomial with complex coefficients and
$\operatorname{Re}\Phi(x)=-|x|^2$. Repeated differentiation gives
$$
 \partial_x^\beta e^{\Phi(x)}=R_\beta(x)e^{\Phi(x)}
$$
for a polynomial $R_\beta$, since $R_0=1$ and each differentiation
in $x_\ell$ replaces the polynomial $R$ by
$\partial_{x_\ell}R+R\partial_{x_\ell}\Phi$, which is again a
polynomial. For each pair of multiindices
$\alpha,\beta\in\mathbb N_0^{2n}$, there exist a constant $C>0$
and an integer $N\geq0$, depending on $n,\alpha,\beta$ and $\Phi$
but not on $x$, such that
$$
 |x^\alpha\partial_x^\beta e^{\Phi(x)}|
 =|x^\alpha R_\beta(x)|e^{-|x|^2}
 \leq C(1+|x|)^N e^{-|x|^2}.
$$
The right side is bounded on $\mathbb R^{2n}$, so the criterion
\eqref{E6} shows that each term in \eqref{eq:symbol-sums} is
Schwartz, as are $f$ and $g$.

To prove nonradiality, set
$$
 r=\sqrt{\pi/8},\quad p=(r,0,\ldots,0)^T,\quad
 q=(r/\sqrt2,ir/\sqrt2,0,\ldots,0)^T.
$$
Then $|p|^2=|q|^2=\pi/8<1$, and
$$
 (X(p),Y(p),Z(p))=(\pi/8,0,0),\quad
 (X(q),Y(q),Z(q))=(0,\pi/8,0).
$$
Substitution in \eqref{eq:f} and \eqref{eq:g} gives
$$
 |f(p)|=|g(p)|=2e^{-\pi/8}>0,\quad f(q)=g(q)=0.
$$
If $f$ were radial almost everywhere, continuity would imply
that $f(z)$ depends only on $|z|$. Hence $f(p)=f(q)$, since
$|p|=|q|$, contradicting the values above. The same argument
applies to $g$, so neither symbol is radial, even up to equality
almost everywhere, which completes the proof.
\end{proof}

We next compute the ordered products of the matrices in \eqref{eq:UV}.
For $s=(s_1,s_2,s_3),t=(t_1,t_2,t_3)\in\{-1,1\}^3$,
\eqref{eq:W-definition} and \eqref{eq:E-table} give
\begin{align}
 4W(s_1,s_2,s_3)W(t_1,t_2,t_3)
 &=(1-s_1t_1-s_2t_2-s_3t_3)I_2\nonumber\\
 &\quad+(s_1+t_1+s_2t_3-s_3t_2)E_1\nonumber\\
 &\quad+(s_2+t_2+s_3t_1-s_1t_3)E_2\nonumber\\
 &\quad+(s_3+t_3+s_1t_2-s_2t_1)E_3.
 \label{eq:W-product}
\end{align}
Write $A_0,A_1,A_2,A_3$ for the respective coefficients of
$I_2,E_1,E_2,E_3$ in \eqref{eq:W-product}. For example, $U_1V_1$ corresponds to
$s=(-1,-1,-1)$ and $t=(1,-1,-1)$, so we have 
\begin{align*}
 A_0&=1-(-1)(1)-(-1)(-1)-(-1)(-1)=0.\end{align*}
 The same way gives  $A_1=0, A_2=-4,$ and $A_3=0$.
Then $4U_1V_1=-4E_2$. 

To collect terms with the same matrix product, set
\begin{equation}\label{E7}
 \mathcal H=\{I_2,E_1,E_2,E_3,-E_1,-E_2,-E_3\}.
\end{equation}
For each $H\in\mathcal H$, define
\begin{equation}\label{eq:cancellation-coefficients}
 C_H=\sum_{\{(j,k):\,U_jV_k=H\}}c_jd_k.
\end{equation}
This sum collects the coefficients of all ordered pairs that give
the same matrix product $H$.
Table \ref{tab:products} records the results of \eqref{eq:W-product}
for all sixteen pairs from \eqref{eq:UV}, together with $c_jd_k$.
\setlength{\LTcapwidth}{\textwidth}
\renewcommand{\arraystretch}{1.15}
\begin{longtable}{@{}c@{}}
\caption{Coefficients of $4U_jV_k$, products $U_jV_k$, and coefficients $c_jd_k$.}\label{tab:products}\\
\endfirsthead
\caption[]{Coefficients of $4U_jV_k$, products $U_jV_k$, and coefficients $c_jd_k$. (continued)}\\
\endhead
\multicolumn{1}{r}{\footnotesize Continued on the next page.}\\
\endfoot
\endlastfoot
\begin{tabular}{@{}l*{8}{c}@{}}
\toprule
$(j,k)$ & $(1,1)$ & $(1,2)$ & $(1,3)$ & $(1,4)$ & $(2,1)$ & $(2,2)$ & $(2,3)$ & $(2,4)$\\
\midrule
$A_0$ & 0 & 0 & 0 & 4 & 0 & 0 & 4 & 0\\
$A_1$ & 0 & 0 & -4 & 0 & 0 & 0 & 0 & 4\\
$A_2$ & -4 & 0 & 0 & 0 & 0 & 4 & 0 & 0\\
$A_3$ & 0 & -4 & 0 & 0 & -4 & 0 & 0 & 0\\
$U_jV_k$ & $-E_2$ & $-E_3$ & $-E_1$ & $I_2$ & $-E_3$ & $E_2$ & $I_2$ & $E_1$\\
$c_jd_k$ & $1$ & $-i$ & $-1$ & $i$ & $i$ & $1$ & $-i$ & $-1$\\
\bottomrule
\end{tabular}\\[1.5ex]
\begin{tabular}{@{}l*{8}{c}@{}}
\toprule
$(j,k)$ & $(3,1)$ & $(3,2)$ & $(3,3)$ & $(3,4)$ & $(4,1)$ & $(4,2)$ & $(4,3)$ & $(4,4)$\\
\midrule
$A_0$ & 0 & 4 & 0 & 0 & 4 & 0 & 0 & 0\\
$A_1$ & 4 & 0 & 0 & 0 & 0 & -4 & 0 & 0\\
$A_2$ & 0 & 0 & -4 & 0 & 0 & 0 & 0 & 4\\
$A_3$ & 0 & 0 & 0 & 4 & 0 & 0 & 4 & 0\\
$U_jV_k$ & $E_1$ & $I_2$ & $-E_2$ & $E_3$ & $I_2$ & $-E_1$ & $E_3$ & $E_2$\\
$c_jd_k$ & $1$ & $-i$ & $-1$ & $i$ & $i$ & $1$ & $-i$ & $-1$\\
\bottomrule
\end{tabular}\\
\end{longtable}
Each data column corresponds to the pair $(j,k)$ in its heading.
The table is arranged in two parts, each containing eight pairs.
The rows $A_0,A_1,A_2,A_3$ give the coefficients of $4U_jV_k$.
Dividing that matrix by $4$ gives the entry in the row $U_jV_k$,
and the last row gives $c_jd_k$.
For example, the column $(1,1)$ records $U_1V_1=-E_2$
and $c_1d_1=1$. Adding the scalar coefficients in columns
with the same product gives Table \ref{tab:cancellations}.

\setlength{\LTcapwidth}{\textwidth}
\renewcommand{\arraystretch}{1.2}
\begin{longtable}{@{}cll@{}}
\caption{Sums of coefficients for the distinct matrix products.}\label{tab:cancellations}\\
\toprule
$H$ & $C_H$ & Value\\
\midrule
\endfirsthead
\caption[]{Sums of coefficients for the distinct matrix products. (continued)}\\
\toprule
$H$ & $C_H$ & Value\\
\midrule
\endhead
\midrule
\multicolumn{3}{r}{\footnotesize Continued on the next page.}\\
\endfoot
\bottomrule
\endlastfoot
$I_2$ & $c_1d_4+c_2d_3+c_3d_2+c_4d_1$ & $i-i-i+i=0$\\
$-E_2$ & $c_1d_1+c_3d_3$ & $1-1=0$\\
$-E_3$ & $c_1d_2+c_2d_1$ & $-i+i=0$\\
$-E_1$ & $c_1d_3+c_4d_2$ & $-1+i(-i)=0$\\
$E_2$ & $c_2d_2+c_4d_4$ & $i(-i)+i^2=0$\\
$E_1$ & $c_2d_4+c_3d_1$ & $i^2+1=0$\\
$E_3$ & $c_3d_4+c_4d_3$ & $i-i=0$\\
\end{longtable}
In Table \ref{tab:cancellations}, the first column lists the
common matrix product $H$.
The second adds the coefficients of all columns in Table \ref{tab:products}
with that product, and the third evaluates the sum. For example,
the four pairs $(1,4)$, $(2,3)$, $(3,2)$ and $(4,1)$ have
product $I_2$, so $C_{I_2}=i-i-i+i=0$.
The tables give the cancellation needed in the operator calculation.
\begin{lemma}\label{lem:matrix-cancellation}
For the matrices and coefficients in \eqref{eq:UV}, with
$\mathcal H$ as in \eqref{E7},
$$
 U_jV_k\in\mathcal H\quad(1\leq j,k\leq4),\quad
 C_H=0\quad(H\in\mathcal H),
$$
where $C_H$ is defined in \eqref{eq:cancellation-coefficients}.
\end{lemma}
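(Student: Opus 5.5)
The plan is a direct finite verification organized so that it can be checked by hand. The first step is to evaluate the product formula \eqref{eq:W-product} for each of the sixteen ordered pairs $(U_j,V_k)$, with the sign vectors read off from \eqref{eq:UV}. The coefficients $A_0,\ldots,A_3$ are integers bounded by $4$ in absolute value.

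Rather than grind blindly, I would first note a structural fact. By Lemma \ref{lem:matrix-family}, each $W(\varepsilon)$ corresponds to the unit quaternion $\tfrac12(1+\varepsilon_1 i+\varepsilon_2 j+\varepsilon_3 k)$, via $E_1\mapsto i$, $E_2\mapsto j$, $E_3\mapsto k$, which is consistent with table \eqref{eq:E-table}. These are the eight Hurwitz units with positive real part. A product of two such units is again a Hurwitz unit, so $U_jV_k$ lies among the $24$ Hurwitz units. Its real part is $A_0/4=\tfrac14(1-s\cdot t)$, and $s\cdot t\in\{-3,-1,1,3\}$.

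The specific choice in \eqref{eq:UV} appears designed so that the entry-by-entry computation always produces exactly one nonzero coefficient, equal to $\pm4$. This happens precisely when $s\cdot t\in\{1,-3\}$ and the other coefficients vanish. It is exactly what Table \ref{tab:products} records, and it shows $U_jV_k\in\{\pm I_2,\pm E_1,\pm E_2,\pm E_3\}$. The sign of $A_0$ is nonnegative, because $A_0=1-s\cdot t\ge 0$ when it is nonzero. Hence $-I_2$ never occurs, and $U_jV_k\in\mathcal H$.

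Concretely, I would fill in each column of Table \ref{tab:products} by substituting $s$ and $t$ into the four linear/bilinear expressions in \eqref{eq:W-product}. For instance, for $(1,4)$ we have $s=(-1,-1,-1)$ and $t=(1,1,1)$, which gives $A_0=4$ and $A_1=A_2=A_3=0$, so $U_1V_4=I_2$. Alongside, I would compute $c_jd_k$ from the last two columns of \eqref{eq:UV}.

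The second step is to group the sixteen pairs by their product $H$ and add the coefficients, as in Table \ref{tab:cancellations}. The grouping is as follows. $I_2$ arises from the antidiagonal pairs $(1,4),(2,3),(3,2),(4,1)$. Each of the six matrices $\pm E_m$ arises from exactly two pairs. Every group sum vanishes because the two (or four) values $c_jd_k$ pair off as $z$ and $-z$. Since the seven groups account for all $4+6\cdot2=16$ pairs, no product outside $\mathcal H$ is possible and no pair is omitted.

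The only real obstacle is bookkeeping: making sure no sign in \eqref{eq:W-product} is misapplied, particularly in the cross terms $s_2t_3-s_3t_2$ and so on. As an independent check, I would verify that the formula reproduces the table \eqref{eq:E-table} relations, e.g.\ $E_1E_2=E_3$. I would also confirm that each column with $A_0=0$ satisfies $s\cdot t=1$ and each column with $A_0=4$ satisfies $s\cdot t=-3$. With this check in place, the lemma is a complete finite computation.
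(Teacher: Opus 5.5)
Your proposal is correct and follows the paper's proof: evaluate \eqref{eq:W-product} for the sixteen sign pairs to obtain Table~\ref{tab:products}, then group the pairs by their product and sum the $c_jd_k$ as in Table~\ref{tab:cancellations}. The Hurwitz-unit remark is a good sanity check. In fact every $U_j$ has sign product $-1$ and every $V_k$ has sign product $+1$, which forces $s\cdot t\in\{1,-3\}$. Note, however, that your phrase ``$A_0=1-s\cdot t\ge0$ when it is nonzero'' is true only in those two cases and fails in general, since $s=t$ gives $A_0=-2$.
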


\begin{proof}
Substituting the choices of signs from \eqref{eq:UV} into
\eqref{eq:W-product} gives the sixteen cases in Table \ref{tab:products}.
All entries in its rows labelled $U_jV_k$ belong to $\mathcal H$.
Summing the corresponding coefficients according to
\eqref{eq:cancellation-coefficients} gives Table \ref{tab:cancellations},
whose last column is zero in every row, which completes the proof.
\end{proof}

The Gaussian representations in \eqref{eq:symbol-sums}
allow us to apply these cancellations to the operators.

\begin{proposition}\label{prop:operator-products}
For the functions $f,g$ in \eqref{eq:f} and \eqref{eq:g},
$$
 T_fT_g=0,\quad T_f\ne0,\quad T_g\ne0
 \quad\hbox{on }F^2(\Cn).
$$
\end{proposition}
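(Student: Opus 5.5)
The plan is to compute $T_fT_g$ on reproducing kernels and then use density. By \eqref{eq:symbol-sums} and linearity, $T_g=\sum_j c_jT_{\sigma_{U_j}}$ and $T_f=\sum_k d_kT_{\sigma_{V_k}}$. All these operators are bounded, since the symbols are bounded by Lemma \ref{lem:symbol-properties}. Lemma \ref{lem:gaussian} gives $T_{\sigma_U}K_a=\kappa_nK_{D_U^*a}$, so for every $a\in\Cn$
$$
 T_{\sigma_{V_k}}T_{\sigma_{U_j}}K_a=\kappa_n^2K_{D_{V_k}^*D_{U_j}^*a}
 =\kappa_n^2K_{(D_{U_j}D_{V_k})^*a}.
$$
The product $D_{U_j}D_{V_k}=\diag(U_jV_k/16,I_{n-2}/4)$ depends on $(j,k)$ only through $U_jV_k$. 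Grouping the sixteen terms by $H=U_jV_k\in\mathcal H$ then gives
$$
 T_fT_gK_a=\kappa_n^2\sum_{H\in\mathcal H}C_H\,K_{\diag(H/16,I_{n-2}/4)^*a},
$$
and this vanishes by Lemma \ref{lem:matrix-cancellation}. The kernels $K_a$ span a dense subspace of $F^2(\Cn)$, because $h\perp K_a$ for all $a$ means $h(a)=0$ for all $a$. Since $T_fT_g$ is bounded, it follows that $T_fT_g=0$.

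For nonvanishing, I would evaluate $T_gK_0$ and $T_fK_0$. With $a=0$, every term equals $\kappa_nK_0=\kappa_n$, so $T_gK_0=\kappa_n\sum_jc_j=\kappa_n(2+2i)\neq0$. On the other hand, $T_fK_0=\kappa_n\sum_kd_k=\kappa_n(1-i-1+i)=0$, so $K_0$ does not detect $T_f$. For $T_f$ I would instead use general $a$:
$$
 (T_fK_a)(z)=\kappa_n\sum_k d_k\exp\bigl((D_{V_k}z)\cdot a\bigr).
$$
Take $a=(a_1,a_2,0,\ldots,0)^T$. The exponents are $\tfrac14 (V_k\xi)\cdot(a_1,a_2)^T$, which are linear forms in $\xi$. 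The four matrices $V_k=W(\pm1,\pm1,\pm1)$ are distinct, so for generic $a$ these four linear functionals $\xi\mapsto(V_k\xi)\cdot\alpha$ (with $\alpha=(a_1,a_2)^T$) are distinct. I need $V_1^*\alpha,\dots,V_4^*\alpha$ pairwise distinct, i.e.\ $\alpha\notin\ker(V_k-V_l)^*$ for $k\neq l$. The difference $V_k-V_l$ is a nonzero combination of the $E_j$. Such a matrix $S$ satisfies $S^*S=-S^2=(\sum x_j^2)I_2$ for real coefficients, so it is invertible. Hence any $\alpha\ne0$ works.

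Exponentials of distinct linear functions are linearly independent, so with nonzero coefficients $d_k$ we get $T_fK_a\ne0$. The same argument also gives $T_g\neq0$ and can replace the $K_0$ computation.

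The main obstacle is only bookkeeping. One must check the order of composition, namely that $T_fT_g$ yields $D_{U_j}D_{V_k}$ (matching $U_jV_k$ in Table \ref{tab:products}) and not the reverse. From $T_{\sigma_V}K_b=\kappa_nK_{D_V^*b}$ with $b=D_U^*a$ we get $D_V^*D_U^*=(D_UD_V)^*$, which is consistent. The linear independence of exponentials of distinct linear forms is standard: restrict to a complex line on which the forms take distinct values and apply the one-variable Vandermonde/Wronskian argument.
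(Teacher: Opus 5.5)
Your proposal is correct. For $T_fT_g=0$ and $T_g\neq0$ it follows the paper's proof. You act on kernels via Lemma~\ref{lem:gaussian}, use $D_{V_k}^*D_{U_j}^*=(D_{U_j}D_{V_k})^*$ with $D_{U_j}D_{V_k}=\diag(U_jV_k/16,I_{n-2}/4)$, group by $H\in\mathcal H$, apply Lemma~\ref{lem:matrix-cancellation}, and conclude by density of kernels; you also use $T_gK_0=2\kappa_n(1+i)$.

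Where you differ is the proof of $T_f\neq0$. The paper makes one explicit evaluation, $(T_fK_{a_0})(a_0)$ at $a_0=(1,0,\ldots,0)^T$. There the four exponents collapse to the two values $(1\pm i)/8$, because $(V_k)_{11}=(1\pm i)/2$. The paired coefficients $d_1+d_4=1+i=-(d_2+d_3)$ then give $2i\kappa_n(1+i)e^{1/8}\sin(1/8)\neq0$.

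You argue structurally instead. Each difference $V_k-V_l$ is a nonzero real combination $S$ of $E_1,E_2,E_3$. By the relations in the multiplication table, $S^*S=-S^2=(\sum x_j^2)I_2$, so $S$ is invertible. Hence for any $\alpha=(a_1,a_2)^T\neq0$ the linear forms $\xi\mapsto (V_k^*\alpha)^*\xi/4$ are pairwise distinct. Linear independence of exponentials of distinct linear forms then forces $\sum_k d_k e^{(V_k\xi/4)\cdot\alpha}\not\equiv0$.

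The paper's route is shorter and fully explicit, with no appeal to an independence lemma. However, it depends on the particular coefficients: it needs $d_1+d_4\neq0$ after the collapse to two exponents. Your route uses a standard lemma whose proof you only sketch, but it does not depend on the specific $d_k$ beyond their not all vanishing. It shows $T_fK_a\neq0$ for every nonzero $a=(a_1,a_2,0,\ldots,0)^T$, and the same argument gives $T_g\neq0$ as well.
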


\begin{proof}
The estimates in Lemma \ref{lem:symbol-properties} show that
$T_f$ and $T_g$ are bounded on $F^2(\Cn)$.
Fix $a\in\Cn$. By \eqref{eq:kernel-gaussian} and
\eqref{eq:symbol-sums},
\begin{equation}\label{eq:g-kernel-action}
 T_gK_a=\kappa_n\sum_{j=1}^4c_jK_{D_{U_j}^*a}.
\end{equation}
Applying $T_f$ to these kernel functions and using \eqref{eq:kernel-gaussian}
and \eqref{eq:symbol-sums} again gives
\begin{equation}\label{E8}
\begin{aligned}
 T_fT_gK_a
 &=\kappa_n\sum_{j=1}^4c_j T_fK_{D_{U_j}^*a}\\
 &=\kappa_n^2\sum_{j,k=1}^4c_jd_k
            K_{D_{V_k}^*D_{U_j}^*a}.
\end{aligned}
\end{equation}
Since $D_{V_k}^*D_{U_j}^*=(D_{U_j}D_{V_k})^*$,
\eqref{eq:D-definition} gives
\begin{align*}
 &K_{D_{V_k}^*D_{U_j}^*a}(z)
 =\exp\bigl((D_{U_j}D_{V_k}z)\cdot a\bigr)
\end{align*}
and 
\begin{equation*}
D_{U_j}D_{V_k}=\diag(U_jV_k/16,I_{n-2}/4).
\end{equation*}
To separate the dependence on $U_jV_k$, write
$$
 z=\begin{pmatrix}\xi\\z'\end{pmatrix},\quad
 a=\begin{pmatrix}\eta\\a'\end{pmatrix},
$$
where $\xi=(z_1,z_2)^T$ and $\eta=(a_1,a_2)^T\in\mathbb C^2$, while $z'=(z_3,\ldots,z_n)^T$ and
$a'=(a_3,\ldots,a_n)^T$ belong to $\mathbb C^{n-2}$.
Since $a^*=\begin{pmatrix}\eta^*&(a')^*\end{pmatrix}$, we have
$$
\begin{aligned}
 (D_{U_j}D_{V_k}z)\cdot a
 &=\begin{pmatrix}\eta^*&(a')^*\end{pmatrix}
   \begin{pmatrix}U_jV_k\xi/16\\z'/4\end{pmatrix} =\eta^*U_jV_k\xi/16+(a')^*z'/4\\
 &=\left(\frac{U_jV_k\xi}{16}\right)\cdot\eta
        +(z'/4)\cdot a'.
\end{aligned}
$$
Substituting this expression for the Hermitian inner product
into \eqref{E8} gives
\begin{equation}\label{eq:product}
 (T_fT_gK_a)(z)
 =\kappa_n^2 e^{(z'/4)\cdot a'}
       \sum_{j,k=1}^4c_jd_k
             \exp\left(\left(\frac{U_jV_k\xi}{16}\right)
                       \cdot\eta\right).
\end{equation}

Terms with the same matrix product in \eqref{eq:product}
have the same exponential factor. Collecting these terms according to
\eqref{eq:cancellation-coefficients} and using
Lemma \ref{lem:matrix-cancellation}, we obtain
$$
\begin{aligned}
 &\sum_{j,k=1}^4c_jd_k
     \exp\left(\left(\frac{U_jV_k\xi}{16}\right)\cdot\eta\right)\\
 &\quad=\sum_{H\in\mathcal H}
     \left(\sum_{\{(j,k):\,U_jV_k=H\}}c_jd_k\right)
     \exp\left(\left(\frac{H\xi}{16}\right)\cdot\eta\right)\\
 &\quad=\sum_{H\in\mathcal H}C_H
     \exp\left(\left(\frac{H\xi}{16}\right)\cdot\eta\right)=0.
\end{aligned}
$$
Equation \eqref{eq:product} therefore gives
$T_fT_gK_a=0$ for every $a\in\Cn$.
The kernels have dense linear span in $F^2(\Cn)$, and $T_fT_g$
is bounded, so $T_fT_g=0$ on $F^2(\Cn)$.

To show that $T_g$ is nonzero, we set $a=0$ in
\eqref{eq:g-kernel-action} and use \eqref{eq:UV} and $K_0=1$ to obtain
$$
 T_gK_0=\kappa_n(c_1+c_2+c_3+c_4)
       =2\kappa_n(1+i)\ne0.
$$
For $f$, the coefficient sum is $d_1+d_2+d_3+d_4=0$, so
applying $T_f$ to $K_0$ alone does not show that $T_f$ is nonzero.
We now apply $T_f$ to $K_{a_0}$ with
$a_0=(1,0,\ldots,0)^T\in\Cn$. Since only the first coordinate
of $a_0$ is nonzero, one can see 
$$
 (D_{V_k}a_0)\cdot a_0
 =a_0^*D_{V_k}a_0=(V_k)_{11}/4.
$$
By \eqref{eq:W-entries} and \eqref{eq:UV}, the $(1,1)$ entries of
$V_1,V_4$ are $(1+i)/2$, and those of $V_2,V_3$ are
$(1-i)/2$. For these two pairs, the coefficient sums are
$$
 d_1+d_4=1+i,\quad d_2+d_3=-i-1=-(1+i).
$$
Using \eqref{eq:D-definition}, \eqref{eq:kernel-gaussian} and
\eqref{eq:symbol-sums}, we obtain
$$
\begin{aligned}
 (T_fK_{a_0})(a_0)
 &=\kappa_n\sum_{k=1}^4d_k
                    \exp\bigl((D_{V_k}a_0)\cdot a_0\bigr)
                    \\
 &=\kappa_n\left\{(d_1+d_4)e^{(1+i)/8}
                      +(d_2+d_3)e^{(1-i)/8}\right\}\\
 &=\kappa_n(1+i)e^{1/8}(e^{i/8}-e^{-i/8})\\
 &=2i\kappa_n(1+i)e^{1/8}\sin(1/8)\ne0.
\end{aligned}
$$
Thus, $T_g\ne0$ and $T_f\ne0$ on $F^2(\Cn)$.
\end{proof}

\begin{proof}[Proof of Theorem \ref{thm:counterexample}]
Lemma \ref{lem:symbol-properties} gives the Schwartz regularity,
nonradiality and Gaussian bounds of $f,g$.
Proposition \ref{prop:operator-products} gives their zero Toeplitz
product and the nonvanishing of both operators. This proof is complete.
\end{proof}

\section{Proof of Theorem \ref{thm:moment-example}}\label{sec:moment-example}

Throughout this section, $n=1$. We use the definitions from the
introduction, so
$$
 \dd\mu(z)=\pi^{-1}e^{-|z|^2}\,\dd A(z),\quad
 \mathcal P=\mathbb C[z].
$$
For unbounded symbols, the domain of the product must be specified.
We use the extension in \cite[Definition 4.1]{BL}. It is clear that the functions
$e_j(z)=z^j/\sqrt{j!}$, $j\ge0$, form the standard orthonormal
basis of $F^2(\mathbb C)$.
For a measurable function $u:\mathbb C\to\mathbb C$, let
$\mathcal D(\widetilde T_u)$ consist of the functions $h\in F^2(\mathbb C)$
such that
\begin{equation}\label{E9}
 uh\overline{e}_j\in L^1(d\mu)\quad(j\ge0),\quad
 \sum_{j=0}^\infty
 \left|\int_{\mathbb C}uh\overline{e}_j\,\dd\mu\right|^2<\infty.
\end{equation}
Define $\widetilde T_u:\mathcal D(\widetilde T_u)\to F^2(\mathbb C)$ by
\begin{equation}\label{eq:moment-domain}
 \widetilde T_u h
 =\sum_{j=0}^\infty
 \left(\int_{\mathbb C}uh\overline{e}_j\,\dd\mu\right)e_j.
\end{equation}
By the discussion following Definition 4.1 in \cite{BL},
$T_u\subset\widetilde T_u$. In particular, the two operators agree
when $u$ is bounded.
The domain of $\widetilde T_f\widetilde T_g$ is
$$
 \{h\in\mathcal D(\widetilde T_g):
       \widetilde T_g h\in\mathcal D(\widetilde T_f)\}.
$$

We construct the symbols so that their extended Toeplitz product
vanishes on $\mathcal P$, as required by the formulation preceding
Question A in \cite[Section 6]{BL}. For this construction,
fix $c\in(1/2,1)$ and set
\begin{equation}\label{E10}
 \sigma=1-c\in(0,1/2),\quad q=2\sigma,\quad
 \gamma=q^{-1},\quad L=\sqrt{\frac{\pi}{2\sigma}}.
\end{equation}
Let $\mathbb Z$ denote the integers. We use the square lattice
$$
 \Gamma=\{jL+ikL:j,k\in\mathbb Z\}\subset\mathbb C.
$$
A function on $\mathbb C$ is periodic with respect to $\Gamma$
if its value is unchanged under both translations $z\mapsto z+L$
and $z\mapsto z+iL$.
Using the Jacobi theta function with parameter $i$
\cite[Section 20.2]{DLMF}, we define
$\vartheta_1,\psi:\mathbb C\to\mathbb C$ by
\begin{equation}\label{eq:moment-psi}
 \vartheta_1(\zeta)
 =-i\sum_{j\in\mathbb Z}(-1)^j
      e^{-\pi(j+1/2)^2}e^{i(2j+1)\zeta}\quad\text{and}\quad\psi(z)=e^{\sigma z^2}\vartheta_1(\pi z/L)
\end{equation}
for $\zeta,z\in \mathbb{C}$.
Define $v:\mathbb C\to[0,\infty)$ by
\begin{equation}\label{eq:moment-periodic-weight}
 v(w)=e^{-2\sigma|w|^2}|\psi(w)|^2.
\end{equation}
For $k=(k_1,k_2)^T\in\mathbb Z^2$, set
\begin{equation}\label{E18}
 \omega_k=(\omega_{k,1},\omega_{k,2})^T=\frac{2\pi}{L}k.
\end{equation}
Define the Fourier coefficients of $v$ by
\begin{equation}\label{E11}
 v_k=\frac1{L^2}\int_{[0,L]^2}
 v(x+iy)e^{-i(\omega_{k,1}x+\omega_{k,2}y)}\,\dd x\,\dd y.
\end{equation}
We write $v_0$ for $v_{(0,0)^T}$. For $H\in L^1(\mathbb C,dA)$,
we use the Fourier transform
\begin{equation}\label{E12}
 \widehat H(\nu)=\int_{\mathbb R^2}H(x+iy)
 e^{-i(x\nu_1+y\nu_2)}\,\dd x\,\dd y,
 \quad \nu=(\nu_1,\nu_2)^T\in\mathbb R^2.
\end{equation}
Choose $R>0$ and a nonnegative radial function
$\rho\in C_c^\infty(\mathbb R^2)$ such that
\begin{equation}\label{E13}
 R<\frac{\pi}{2L},\quad
 \rho=1\text{ near }0,\quad
 \operatorname{supp}\rho\subset\{\nu:|\nu|<R\}.
\end{equation}
Here $C_c^\infty(\mathbb R^2)$ is the space of smooth functions
with compact support. With the convention \eqref{E12}, define $r$
as the inverse Fourier transform of $\rho$ by
\begin{equation}\label{eq:moment-G}
 r(x+iy)=\frac1{(2\pi)^2}
  \int_{\mathbb R^2}\rho(\nu)e^{i(x\nu_1+y\nu_2)}\,\dd\nu.
\end{equation}
Here $\dd\nu=\dd\nu_1\,\dd\nu_2$. The two symbols are
\begin{equation}\label{eq:moment-symbols}
 f(w)=w e^{(1-2\sigma)|w|^2}\overline{\psi(w)}r(w)\quad\text{and} \quad
 g(w)=e^{-(\gamma-1)|w|^2}\psi(\gamma w).
\end{equation}

We first verify the properties of $\psi$ that control the growth
of the symbols and make the weight $v$ periodic.

\begin{lemma}\label{L1}
The function $\psi$ is entire, with $\psi(0)=\psi(L)=0$ and
$\psi(L/2)>0$. For every $z\in\mathbb C$, its translations satisfy

 \begin{align} \label{eq:moment-psi-periods}
 \psi(z+L)=-e^{2\sigma Lz+\sigma L^2}\psi(z),\quad
 \psi(z+iL)=-e^{-2i\sigma Lz+\sigma L^2}\psi(z).
 \end{align}
There is a constant $C>0$, depending only on $c$, such that
\begin{equation} \label{eq:moment-psi-bound}
 |\psi(z)|\le C e^{\sigma|z|^2}\quad(z\in\mathbb C).
\end{equation}
The function $v$ defined in \eqref{eq:moment-periodic-weight} is
smooth, nonnegative, nonzero, and periodic with respect to $\Gamma$. All its real partial derivatives are bounded.
\end{lemma}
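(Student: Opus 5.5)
The plan is to get every claim from the explicit series \eqref{eq:moment-psi} and the standard quasi-periodicity of the theta function. The key observation is that the Gaussian factor $e^{\sigma z^2}$ in $\psi$, together with the normalization $L^2=\pi/(2\sigma)$, converts the theta multiplier into exactly the factors in \eqref{eq:moment-psi-periods}. Once periodicity of $v$ is known, the growth bound \eqref{eq:moment-psi-bound} and the derivative bounds follow from continuity on a compact fundamental domain.

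\emph{Analytic facts about $\vartheta_1$.} The terms of the series are bounded by $e^{-\pi(j+1/2)^2+(2|j|+1)|\operatorname{Im}\zeta|}$. The series therefore converges uniformly on compact sets, so $\vartheta_1$ and hence $\psi$ are entire.

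Replacing $j$ by $-1-j$ shows that $\vartheta_1$ is odd, so $\vartheta_1(0)=0$. Since $e^{i(2j+1)\pi}=-1$, we get $\vartheta_1(\zeta+\pi)=-\vartheta_1(\zeta)$, hence $\vartheta_1(\pi)=0$. These give $\psi(0)=\psi(L)=0$.

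At $\zeta=\pi/2$ we have $e^{i(2j+1)\pi/2}=i(-1)^j$. Hence $\vartheta_1(\pi/2)=\sum_j e^{-\pi(j+1/2)^2}>0$, and so $\psi(L/2)=e^{\sigma L^2/4}\vartheta_1(\pi/2)>0$.

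For the imaginary period, I shift the summation index $j\mapsto j-1$ in $\vartheta_1(\zeta+i\pi)$ and complete the square in $-\pi(j+1/2)^2-\pi(2j+1)$. This gives
\[
\vartheta_1(\zeta+i\pi)=-e^{\pi}e^{-2i\zeta}\vartheta_1(\zeta).
\]
This index-shift computation is the only place where real care with signs is needed; I expect it to be the main (if modest) obstacle.

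\emph{Translation formulas.} Put $\zeta=\pi z/L$, so $z\mapsto z+L$ corresponds to $\zeta\mapsto\zeta+\pi$, and $z\mapsto z+iL$ to $\zeta\mapsto\zeta+i\pi$. Expand $e^{\sigma(z+L)^2}=e^{\sigma z^2}e^{2\sigma Lz+\sigma L^2}$ and $e^{\sigma(z+iL)^2}=e^{\sigma z^2}e^{2i\sigma Lz-\sigma L^2}$. Then use the two relations implied by \eqref{E10}:
\[
\pi-\sigma L^2=\pi/2=\sigma L^2,\qquad \pi/L=2\sigma L .
\]
With these, the multiplier $-e^{\pi}e^{-2i\pi z/L}$ combines with $e^{2i\sigma Lz-\sigma L^2}$ to give $-e^{-2i\sigma Lz+\sigma L^2}$. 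This yields both identities in \eqref{eq:moment-psi-periods}.

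\emph{Periodicity and bounds for $v$.} Taking squared moduli in \eqref{eq:moment-psi-periods} gives
\[
|\psi(z+L)|^2=e^{4\sigma L\operatorname{Re}z+2\sigma L^2}|\psi(z)|^2,\qquad |\psi(z+iL)|^2=e^{4\sigma L\operatorname{Im}z+2\sigma L^2}|\psi(z)|^2 .
\]
On the other hand,
\[
e^{-2\sigma|z+L|^2}=e^{-2\sigma|z|^2-4\sigma L\operatorname{Re}z-2\sigma L^2},
\]
and similarly for $z+iL$ with $\operatorname{Im}z$ in place of $\operatorname{Re}z$. The factors cancel exactly, so $v$ is $\Gamma$-periodic.

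Since $v=e^{-2\sigma|w|^2}\psi\overline{\psi}$ with $\psi$ entire, $v$ is real-analytic, hence smooth, and it is clearly nonnegative. It is nonzero because $v(L/2)>0$.

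Each real partial derivative of $v$ is continuous and $\Gamma$-periodic, so it is bounded by its maximum over the compact square $[0,L]^2$. In particular $v\le C^2$ for some $C>0$. This gives $|\psi(z)|\le Ce^{\sigma|z|^2}$, which is \eqref{eq:moment-psi-bound}, with $C$ depending only on $\sigma=1-c$.
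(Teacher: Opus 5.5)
Your proposal is correct and follows essentially the same route as the paper: $\psi$ is entire because the theta series converges locally uniformly, and the zeros and the positive value at $L/2$ come directly from the series (oddness and the $\pi$-shift). The translation identities follow from the quasi-periodicity $\vartheta_1(\zeta+i\pi)=-e^{\pi-2i\zeta}\vartheta_1(\zeta)$ together with $\sigma L^2=\pi/2$ and $\pi/L=2\sigma L$. Periodicity of the Gaussian-weighted modulus, plus compactness of the fundamental square, then gives the growth bound and the derivative bounds.

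The only cosmetic differences are two. You obtain $\psi(L)=0$ from $\vartheta_1(\pi)=-\vartheta_1(0)$, whereas the paper uses the translation identity at $z=0$. You also deduce the bound on $\psi$ from boundedness of $v$, whereas the paper uses periodicity of $e^{-\sigma|z|^2}|\psi(z)|$ directly.
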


\begin{proof}
Let $K\subset\mathbb C$ be compact and choose $M\ge0$ such that
$|\operatorname{Im}\zeta|\le M$ for all $\zeta\in K$. The absolute
value of the $j$-th term of the series defining $\vartheta_1$ in
\eqref{eq:moment-psi} is bounded on $K$ by
$$
 e^{-\pi(j+1/2)^2+|2j+1|M}.
$$
To show that these bounds are summable, put $t=|j+1/2|$ and note that
$$
\begin{aligned}
 -\pi t^2+2Mt
 &=-\frac{\pi}{2}t^2
   -\frac{\pi}{2}\left(t-\frac{2M}{\pi}\right)^2
   +\frac{2M^2}{\pi}\\
 &\le-\frac{\pi}{2}t^2+\frac{2M^2}{\pi}.
\end{aligned}
$$
It follows that
$$
 \sum_{j\in\mathbb Z}e^{-\pi(j+1/2)^2+|2j+1|M}
 \le e^{2M^2/\pi}
     \sum_{j\in\mathbb Z}e^{-\frac{\pi}{2}(j+1/2)^2}
 <\infty.
$$
The Weierstrass $M$-test \cite{DLMF} now gives
absolute and uniform convergence of the series for $\vartheta_1$
on $K$. Since $K$ is arbitrary and each term is entire, local
uniform convergence implies that $\vartheta_1$ is entire.
The definition in \eqref{eq:moment-psi} then shows that $\psi$ is
entire as the product of $e^{\sigma z^2}$ and
$\vartheta_1(\pi z/L)$.

By the absolute convergence of the series for $\vartheta_1$
in \eqref{eq:moment-psi}, we may pair the terms indexed by $j$
and $-j-1$ for $j=0,1,\ldots$ to obtain
$$
 \vartheta_1(0)
 =-i\sum_{j=0}^{\infty}
   \bigl[(-1)^j+(-1)^{-j-1}\bigr]e^{-\pi(j+1/2)^2}
 =0.
$$
The terms cancel because $(-j-1+1/2)^2=(j+1/2)^2$ and
$(-1)^{-j-1}=-(-1)^j$. By \eqref{eq:moment-psi}, this also gives
$\psi(0)=0$. At $z=L/2$, we have
$$
 \psi(L/2)
 =e^{\sigma L^2/4}
   \sum_{j\in\mathbb Z}e^{-\pi(j+1/2)^2}>0.
$$
To determine the translations of $\psi$, we use the series for
$\vartheta_1$. Since this series converges absolutely, substitution
and the change of index $j\mapsto j+1$ give
\begin{equation}\label{EE1}
   \vartheta_1(\zeta+\pi)=-\vartheta_1(\zeta)\quad\text{and}\quad
 \vartheta_1(\zeta+i\pi)=-e^{\pi-2i\zeta}\vartheta_1(\zeta).
\end{equation}
The definition of $\psi$ in \eqref{eq:moment-psi} and equation \eqref{EE1} give, for every $z\in\mathbb C$,
\begin{align*}
 \psi(z+L)
 &=e^{\sigma(z+L)^2}
   \vartheta_1\left(\frac{\pi z}{L}+\pi\right)\\
 &=-e^{2\sigma Lz+\sigma L^2}\psi(z).
\end{align*}
For the shift by $iL$, the definition of $L$ in \eqref{E10}
gives $\sigma L^2=\pi/2$ and $\pi/L=2\sigma L$, so we have
\begin{equation*}
 2i\sigma Lz-\frac{2i\pi z}{L}
 =2i\sigma Lz-4i\sigma Lz=-2i\sigma Lz
\end{equation*}
and
\begin{equation*}
  \pi-\sigma L^2=2\sigma L^2-\sigma L^2=\sigma L^2.
\end{equation*}
Applying the above identity for $\vartheta_1$ and using these
equalities, we obtain
$$
\begin{aligned}
 \psi(z+iL)
 &=e^{\sigma(z+iL)^2}
   \vartheta_1\left(\frac{\pi z}{L}+i\pi\right)\\
 &=-e^{\sigma z^2+2i\sigma Lz-\sigma L^2}
   e^{\pi-2i\pi z/L}
   \vartheta_1\left(\frac{\pi z}{L}\right)\\
 &=-e^{2i\sigma Lz-2i\pi z/L+\pi-\sigma L^2}\psi(z)\\
 &=-e^{-2i\sigma Lz+\sigma L^2}\psi(z).
\end{aligned}
$$
This proves \eqref{eq:moment-psi-periods}. Setting $z=0$ in
the first identity and using $\psi(0)=0$ also gives $\psi(L)=0$.

To obtain a bound for $\psi$, write $z=x+iy$, where
$x,y\in\mathbb R$. Taking absolute values in the first identity
in \eqref{eq:moment-psi-periods} gives
$$|\psi(z+L)|=e^{2\sigma Lx+\sigma L^2}|\psi(z)|.$$
It follows that 
$$
\begin{aligned}
 e^{-\sigma|z+L|^2}|\psi(z+L)|
 &=e^{-\sigma(|z|^2+2Lx+L^2)}
   e^{2\sigma Lx+\sigma L^2}|\psi(z)|\\
 &=e^{-\sigma|z|^2}|\psi(z)|.
\end{aligned}
$$
The same calculation with the second identity in
\eqref{eq:moment-psi-periods} gives
$$
 e^{-\sigma|z+iL|^2}|\psi(z+iL)|
 =e^{-\sigma|z|^2}|\psi(z)|.
$$
The two translation identities show that
$e^{-\sigma|z|^2}|\psi(z)|$ is periodic with respect to $\Gamma$.
Since this function is continuous, it attains a finite maximum
$C>0$ on the closed square $Q=\{x+iy:0\le x,y\le L\}$. For any $z=x+iy\in\mathbb C$,
choose $j,k\in\mathbb Z$ such that $jL\le x<(j+1)L$ and
$kL\le y<(k+1)L$. Then $z_0=z-jL-ikL\in Q$, which shows that
the translates of $Q$ by elements of $\Gamma$ cover $\mathbb C$.
Periodicity now gives
$$
 e^{-\sigma|z|^2}|\psi(z)|
 =e^{-\sigma|z_0|^2}|\psi(z_0)|\le C,
$$
which proves \eqref{eq:moment-psi-bound}.
By \eqref{eq:moment-periodic-weight},
$v(z)=\bigl(e^{-\sigma|z|^2}|\psi(z)|\bigr)^2$, so $v$ is also
periodic with respect to $\Gamma$. The function $v$ is smooth
because $\psi$ is entire, and it is nonzero because $\psi(L/2)>0$.
Every real partial derivative of $v$ is continuous and periodic
with respect to $\Gamma$, so its boundedness on $Q$ gives a bound
on $\mathbb C$ by the same translation argument. The proof is complete.
\end{proof}

We will use that multiplication by a polynomial in the real
coordinates preserves the Schwartz class and that Schwartz
functions are integrable, see
\cite[Chapter VIII, Section 4]{Knapp2016}.
In particular, for every $h\in\mathcal S(\mathbb C)$ and
$N\in\mathbb N_0$,
\begin{equation}\label{E19}
 \int_{\mathbb C}(1+|w|^2)^N|h(w)|\,\dd A(w)
 <\infty.
\end{equation}
Taking $N=0$ gives $h\in L^1(dA)$, and larger values of $N$
show that every polynomial in $w$ and $\overline{w}$ times $h$ is
integrable.

Using the smoothness and periodicity of $v$, we combine its
Fourier series with the support condition \eqref{E13} on $\rho$
to obtain the integral identities needed for the operator product.
\begin{proposition}\label{L2}
The functions $r$ and $vr$ belong to $\mathcal S(\mathbb C)$, and
$r(w)>0$ for $|w|\le L$. The Fourier transform of $vr$ satisfies
\begin{equation}\label{eq:moment-flat}
 \widehat{vr}=v_0>0\quad\hbox{on a neighborhood of }0.
\end{equation}
The following integral identities hold
\begin{gather} \label{eq:moment-identities}
 \int_{\mathbb C}v(w)r(w)\,\dd A(w)=v_0,\quad
 \int_{\mathbb C}v(w)r(w)\overline{w}^j\,\dd A(w)=0\quad(j\ge1),
\end{gather}
 and
\begin{equation}\label{E14}
\int_{\mathbb C}wv(w)r(w)w^m\overline{w}^j\,\dd A(w)=0
 \quad(m,j\ge0).
\end{equation}
The Fourier transform of $w\mapsto wv(w)r(w)$ vanishes near $0$.
\end{proposition}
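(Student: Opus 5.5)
The plan is to show that $\widehat{vr}$ is exactly $v_0\rho$ near the origin. This will follow from expanding the periodic weight $v$ in its Fourier series: multiplying by $r$ then turns each exponential into a translate of $\rho$, and all the vanishing moments come out by differentiating $\widehat{vr}$ at $0$.

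\emph{Regularity and positivity of $r$.} Since $\rho\in C_c^\infty(\mathbb R^2)$, its inverse Fourier transform $r$ belongs to $\mathcal S(\mathbb C)$, and with the convention \eqref{E12} we have $\widehat r=\rho$. Because $\rho$ is real and radial, hence even, $r$ is real:
$$
 r(x+iy)=(2\pi)^{-2}\int_{\mathbb R^2}\rho(\nu)\cos(x\nu_1+y\nu_2)\,\dd\nu .
$$
For $|w|\le L$ and $|\nu|<R$ we get $|x\nu_1+y\nu_2|\le LR<\pi/2$ by \eqref{E13}. So the cosine is positive on $\operatorname{supp}\rho$, and since $\rho\ge0$ is not identically zero, $r(w)>0$. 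By Lemma \ref{L1}, $v$ is smooth with all real partial derivatives bounded. The Leibniz rule then bounds $x^\alpha\partial_x^\beta(vr)$ by finitely many terms of the form $\sup|\partial^{\beta'}v|\cdot|x^\alpha\partial^{\beta''}r|$, so $vr\in\mathcal S(\mathbb C)$.

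\emph{Fourier transform of $vr$.} Since $v$ is smooth and periodic with respect to $\Gamma$, its Fourier coefficients \eqref{E11} decay rapidly (integrate by parts over the period square). Hence $v(x+iy)=\sum_k v_k e^{i(\omega_{k,1}x+\omega_{k,2}y)}$ with $\sum_k|v_k|<\infty$, the series converging uniformly. Multiplying by $r\in L^1$ and exchanging sum and integral by dominated convergence gives
$$
 \widehat{vr}(\nu)=\sum_{k\in\mathbb Z^2}v_k\,\rho(\nu-\omega_k).
$$
For $k\ne0$ we have $|\omega_k|\ge2\pi/L>4R$ by \eqref{E13}. So when $|\nu|<R$, every term with $k\ne0$ vanishes, and $\widehat{vr}(\nu)=v_0\rho(\nu)$. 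Since $\rho=1$ near $0$, this yields \eqref{eq:moment-flat}. Finally, $v_0>0$ because it is the mean over a period cell of the continuous, nonnegative, not identically zero function $v$ (Lemma \ref{L1}).

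\emph{Moments.} Since $vr\in\mathcal S$, we may differentiate under the integral sign, using \eqref{E19} for domination:
$$
 \partial_{\nu_1}^a\partial_{\nu_2}^b\widehat{vr}(0)=(-i)^{a+b}\int x^ay^b\,v r\,\dd A .
$$
Because $\widehat{vr}$ is constant near $0$, every real moment $\int x^ay^b vr\,\dd A$ with $a+b\ge1$ vanishes, and the zeroth moment equals $\widehat{vr}(0)=v_0$. Expanding $\overline w^j$ ($j\ge1$) and $w^{m+1}\overline w^j$ as polynomials in $x,y$ with no constant term gives \eqref{eq:moment-identities} and \eqref{E14}. For the last claim, the Fourier transform of $(x+iy)H$ is $i(\partial_{\nu_1}+i\partial_{\nu_2})\widehat H$. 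With $H=vr$, this vanishes on the neighborhood where $\widehat{vr}$ is constant.

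The main obstacle I expect is the rigorous justification of the Fourier-series step: the rapid decay of $v_k$ from the bounded derivatives of $v$, and the interchange of sum and integral. The rest is bookkeeping, together with the spacing inequality $2\pi/L>2R$, which is comfortably implied by \eqref{E13}.
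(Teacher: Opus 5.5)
Your proposal is correct and follows essentially the same route as the paper. You Fourier-expand the $\Gamma$-periodic weight $v$ with rapidly decaying coefficients to get $\widehat{vr}=\sum_k v_k\rho(\cdot-\omega_k)$, use the spacing $|\omega_k|\ge 2\pi/L>4R$ to isolate the $k=0$ term near the origin, and read off all moments from the flatness of $\widehat{vr}$ at $0$. The only differences are cosmetic: you use real derivatives $\partial_{\nu_1}^a\partial_{\nu_2}^b$ and expand $w^{m+1}\overline w^j$ in $x,y$, while the paper applies $\partial_{\nu_1}\pm i\partial_{\nu_2}$ directly.
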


\begin{proof}
Recall that $\rho\in C_c^\infty(\mathbb R^2)$ is the nonnegative
radial function chosen in \eqref{E13}, and that \eqref{eq:moment-G}
defines $r$ as its inverse Fourier transform. Since
$C_c^\infty(\mathbb R^2)\subset\mathcal S(\mathbb R^2)$,
Fourier inversion \cite[Proposition 8.10]{Knapp2016},
with the normalization \eqref{E12}, gives
\begin{equation}\label{E20}
 r\in\mathcal S(\mathbb C)\quad \text{and} \quad\widehat r=\rho.
\end{equation}
The bounded derivatives of $v$ from Lemma \ref{L1}, together
with the product rule, then give $vr\in\mathcal S(\mathbb C)$.

Write $w=x+iy$, where $x,y\in\mathbb R$. The function $\rho$
chosen in \eqref{E13} is nonnegative and radial, so it is real
and satisfies $\rho(-\nu)=\rho(\nu)$. Expanding the exponential
in \eqref{eq:moment-G} gives
$$
\begin{aligned}
 r(w)
 &=\frac1{(2\pi)^2}\int_{\mathbb R^2}
   \rho(\nu)\cos(x\nu_1+y\nu_2)\,\dd\nu\\
 &\quad+\frac{i}{(2\pi)^2}\int_{\mathbb R^2}
   \rho(\nu)\sin(x\nu_1+y\nu_2)\,\dd\nu.
\end{aligned}
$$
Both integrals converge absolutely because $\rho$ is smooth and
compactly supported. Using $\rho(-\nu)=\rho(\nu)$ and the oddness
of sine, the second integral equals 0.
For $|w|\le L$ and
$\nu\in\operatorname{supp}\rho$,  it follows from  the Cauchy–Schwarz Inequality and \eqref{E13} that 
$$
 |x\nu_1+y\nu_2|
 \le\sqrt{x^2+y^2}\sqrt{\nu_1^2+\nu_2^2}
 =|w||\nu|\le LR<\frac\pi2.
$$
Hence $\cos(x\nu_1+y\nu_2)\ge\cos(LR)>0$ on the support
of $\rho$. Since $\rho=1$ near $0$, choose
$\varepsilon\in(0,R)$ such that $\rho(\nu)=1$ for
$|\nu|\le\varepsilon$. The nonnegativity of $\rho$ gives
$$
 \int_{\mathbb R^2}\rho(\nu)\,\dd\nu
 \ge\int_{|\nu|\le\varepsilon}1\,\dd\nu
 =\pi\varepsilon^2.
$$
Combining these bounds in the cosine integral, we obtain
$$
 r(w)
 \ge\frac{\cos(LR)}{(2\pi)^2}
      \int_{\mathbb R^2}\rho(\nu)\,\dd\nu
 \ge\frac{\varepsilon^2\cos(LR)}{4\pi}>0
 \quad(|w|\le L).
$$
This shows that the remaining cosine integral satisfies
$$
 r(w)=\frac1{(2\pi)^2}\int_{\mathbb R^2}
 \rho(\nu)\cos(x\nu_1+y\nu_2)\,\dd\nu>0
 \quad(|w|\le L).
$$

We next estimate the Fourier coefficients in \eqref{E11}
by integration by parts. For each fixed $y\in\mathbb R$,
Lemma \ref{L1} gives $v(x+L+iy)=v(x+iy)$ for all
$x\in\mathbb R$. For every integer $j\ge0$, differentiating
this identity $j$ times with respect to $x$ and setting $x=0$
gives $$\partial_x^jv(L+iy)=\partial_x^jv(iy).$$
By \eqref{E18}, $e^{-i\omega_{k,1}L}=e^{-2\pi i k_1}=1$, the boundary term satisfies
\begin{align*}
 0=\left[\partial_x^jv(x+iy)e^{-i\omega_{k,1}x}\right]_{x=0}^{x=L}
 &=\partial_x^jv(L+iy)e^{-2\pi i k_1}
   -\partial_x^jv(iy).
\end{align*}
The same cancellation holds at $y=0,L$ when integrating in $y$.
So, every boundary term vanishes when we integrate by parts.
For $k_1\ne0$, an antiderivative of $e^{-i\omega_{k,1}x}$ is
$-(i\omega_{k,1})^{-1}e^{-i\omega_{k,1}x}$.
For each fixed $y\in\mathbb R$ and each integer $j\ge0$,
integration by parts gives
$$
\begin{aligned}
 \int_0^L\partial_x^jv(x+iy)e^{-i\omega_{k,1}x}\,\dd x
 &=-\frac1{i\omega_{k,1}}
 \left[\partial_x^jv(x+iy)e^{-i\omega_{k,1}x}\right]_{x=0}^{x=L}\\
 &\quad+\frac1{i\omega_{k,1}}
 \int_0^L\partial_x^{j+1}v(x+iy)e^{-i\omega_{k,1}x}\,\dd x\\
 &=\frac1{i\omega_{k,1}}
 \int_0^L\partial_x^{j+1}v(x+iy)e^{-i\omega_{k,1}x}\,\dd x,
\end{aligned}
$$
where the last equality uses the boundary cancellation above.
The case $j=0$ gives the first integration by parts.

For every integer $N\ge1$, we apply this identity successively for
$j=0,\ldots,N-1$. Each application adds one derivative of $v$
and a factor $(i\omega_{k,1})^{-1}$. Multiplying the resulting
identity by $L^{-2}e^{-i\omega_{k,2}y}$ and integrating in $y$
over $[0,L]$, we obtain from \eqref{E11} that
$$
 v_k=\frac1{L^2(i\omega_{k,1})^N}
 \int_{[0,L]^2}\partial_x^Nv(x+iy)
 e^{-i(\omega_{k,1}x+\omega_{k,2}y)}\,\dd x\,\dd y.
$$
For $N=0$, this formula is the definition \eqref{E11}.
Taking absolute values yields
$$
 |v_k|\le
 \left(\frac{L}{2\pi|k_1|}\right)^N
 \sup_{0\le x,y\le L}|\partial_x^Nv(x+iy)|.
$$
There is an analogous estimate with $k_2$ and $\partial_y^Nv$
when $k_2\ne0$. To obtain an estimate in terms of $|k|$, for
each $k\ne0$ we choose $\ell\in\{1,2\}$ such that
$|k_\ell|=\max\{|k_1|,|k_2|\}$. Since
$$
 |k|^2=|k_1|^2+|k_2|^2\le2|k_\ell|^2,
$$
we have $|k_\ell|\ge |k|/\sqrt2>0$.
Applying the corresponding integration by parts estimate and
using the bounded derivatives from Lemma \ref{L1} gives
$$
 |v_k|\le A_N|k|^{-N}\quad(k\ne0),
$$
where $A_N>0$ depends only on $N$ and $c$ and is independent of $k$.
After increasing the constant and including $k=0$, we obtain
$$
 |v_k|\le C_N(1+|k|)^{-N}
 \quad(k\in\mathbb Z^2, N\in\mathbb N_0),
$$
where $C_N>0$ depends only on $N$ and $c$ and is independent of $k$.

Taking $N=4$ gives absolute summability. The lattice estimate
used in \cite{Grafakos2014}                                                  
applies in real dimension two. Together with
$(1+|k|^2)^{-2}\le4(1+|k|)^{-4}$, it gives
\begin{equation}\label{E21}
 \sum_{k\in\mathbb Z^2}|v_k|
 \le C_4\sum_{k\in\mathbb Z^2}(1+|k|)^{-4}
 \le C_4\sum_{k\in\mathbb Z^2}(1+|k|^2)^{-2}<\infty.
\end{equation}
Since each exponential has absolute value one, the Fourier
series converges absolutely and uniformly. To identify its sum,
we make the change of variables $x=Lt_1/(2\pi)$ and
$y=Lt_2/(2\pi)$ in \eqref{E11}. This gives
$$
 v_k=\frac1{(2\pi)^2}\int_{[0,2\pi]^2}
 v\left(\frac{L}{2\pi}(t_1+it_2)\right)
 e^{-i(k_1t_1+k_2t_2)}\,\dd t_1\,\dd t_2.
$$
Thus $v_k$ is the $k$-th Fourier coefficient of the smooth
function $(t_1,t_2)\mapsto v(L(t_1+it_2)/(2\pi))$, which is
$2\pi$ periodic in each real coordinate. By \eqref{E21}, Fourier inversion
\cite[Proposition 3.2.5]{Grafakos2014} applies after rescaling
to period $2\pi$. The Fourier series then sums to this function almost everywhere. Both functions are continuous, so the equality
holds everywhere and gives
\begin{equation}\label{E15}
 v(x+iy)=\sum_{k\in\mathbb Z^2}v_k
 e^{i(\omega_{k,1}x+\omega_{k,2}y)}.
\end{equation}

For $k=(0,0)^T$, the definition \eqref{E11} gives the average
of $v$ over $[0,L]^2$,
$$
 v_0=\frac1{L^2}\int_{[0,L]^2}v(x+iy)\,\dd x\,\dd y.
$$
By Lemma \ref{L1}, $v$ is continuous, nonnegative, nonzero and
periodic with respect to $\Gamma$. Set
$Q=\{x+iy:0\le x,y\le L\}$. There is a point $w_0\in Q^\circ$
such that $v(w_0)>0$. Otherwise, nonnegativity would give
$v=0$ on $Q^\circ$, and continuity would give $v=0$ on
$Q=\overline{Q^\circ}$. Periodicity would then imply $v\equiv0$
on $\mathbb C$, which is a contradiction.
Put $a=v(w_0)>0$. By continuity at $w_0$ and the fact that
$w_0\in Q^\circ$, we can choose $\varepsilon>0$ such that
$$
 D=\{w\in\mathbb C:|w-w_0|<\varepsilon\}\subset Q^\circ \quad \text{and} \quad
 \quad |v(w)-a|<\frac a2\quad(w\in D).
$$
So we have $v(w)\ge a/2$ on $D$. Since $v\ge0$ on $Q$, we obtain
$$
\begin{aligned}
 v_0
 &=\frac1{L^2}\int_Q v(w)\,\dd A(w)
 \ge\frac1{L^2}\int_D v(w)\,\dd A(w)\\
 &\ge\frac{a}{2L^2}\int_D1\,\dd A(w)
 =\frac{a\pi\varepsilon^2}{2L^2}>0.
\end{aligned}
$$

We now insert \eqref{E15} into the Fourier transform of $vr$.
By \eqref{E20}, we may apply \eqref{E19} with $h=r$ and $N=0$
to obtain $\int_{\mathbb C}|r(w)|\,\dd A(w)<\infty$.
Combining this with \eqref{E21}, for every
$\nu=(\nu_1,\nu_2)^T\in\mathbb R^2$ we obtain
$$
\begin{aligned}
 &\qquad\sum_{k\in\mathbb Z^2}\int_{\mathbb R^2}
 \left|v_kr(x+iy)e^{i(\omega_{k,1}x+\omega_{k,2}y)}
 e^{-i(x\nu_1+y\nu_2)}\right|\,\dd x\,\dd y\\
 &\quad=\left(\sum_{k\in\mathbb Z^2}|v_k|\right)
 \int_{\mathbb C}|r(w)|\,\dd A(w)<\infty.
\end{aligned}
$$
We may therefore interchange the sum and the integral.
Using the convention \eqref{E12}, the Fourier transform
identity in \eqref{E20} and \eqref{E15}, we obtain
\begin{equation}\label{E16}
\begin{aligned}
 \widehat{vr}(\nu)
 &=\int_{\mathbb R^2}v(x+iy)r(x+iy)
   e^{-i(x\nu_1+y\nu_2)}\,\dd x\,\dd y\\
 &=\sum_{k\in\mathbb Z^2}v_k\int_{\mathbb R^2}r(x+iy)
   e^{-i[x(\nu_1-\omega_{k,1})+y(\nu_2-\omega_{k,2})]}
   \,\dd x\,\dd y\\
 &=\sum_{k\in\mathbb Z^2}v_k\widehat r(\nu-\omega_k)
  =\sum_{k\in\mathbb Z^2}v_k\rho(\nu-\omega_k).
\end{aligned}
\end{equation}

By \eqref{E13}, choose $\delta\in(0,R)$ such that
$\rho(\nu)=1$ for $|\nu|<\delta$. For $k\ne0$, \eqref{E18} gives
$$
 |\omega_k|=\frac{2\pi}{L}|k|\ge\frac{2\pi}{L}.
$$
For $|\nu|<\delta$, the triangle inequality and \eqref{E13} yield
$$
 |\nu-\omega_k|
 \ge|\omega_k|-|\nu|
 >\frac{2\pi}{L}-R>R.
$$
The support condition in \eqref{E13} implies 
$\rho(\nu-\omega_k)=0$ for every $k\ne0$, so \eqref{E16} reduces to
$$
 \widehat{vr}(\nu)=v_0\rho(\nu)=v_0
 \quad(|\nu|<\delta),
$$
which proves \eqref{eq:moment-flat}.

To prove \eqref{eq:moment-identities} and \eqref{E14}, fix
$m,j\in\mathbb N_0$ and apply \eqref{E19} with $h=vr$ and
$N=m+j$ to obtain
$$
 \int_{\mathbb C}|v(w)r(w)|\,|w|^{m+j}\,\dd A(w)
 \le\int_{\mathbb C}(1+|w|^2)^{m+j}|v(w)r(w)|\,\dd A(w)
 <\infty.
$$
Write $w=x+iy$ with $x,y\in\mathbb R$, and let
$\nu=(\nu_1,\nu_2)^T\in\mathbb R^2$ be the Fourier variable.
The bound above allows repeated differentiation with respect
to $\nu_1,\nu_2$ under the integral in \eqref{E12}, giving
\begin{align}\label{E23}
 &\qquad(\partial_{\nu_1}+i\partial_{\nu_2})^m
   (\partial_{\nu_1}-i\partial_{\nu_2})^j\widehat{vr}(\nu)\nonumber\\
 &\quad=(-i)^{m+j}\int_{\mathbb C}
 v(w)r(w)w^m\overline{w}^j e^{-i(x\nu_1+y\nu_2)}\,\dd A(w)
\end{align}
for every $\nu\in\mathbb R^2$. By \eqref{eq:moment-flat},
$\widehat{vr}=v_0$ on a neighborhood of $0$,  we can see all derivatives
of total order $m+j\ge1$ vanish at $0$. Evaluating \eqref{E23}
at $\nu=0$ therefore gives
$$
\begin{aligned}
 &\qquad\int_{\mathbb C}v(w)r(w)w^m\overline{w}^j\,\dd A(w)\\
 &\quad=i^{m+j}
 \left[(\partial_{\nu_1}+i\partial_{\nu_2})^m
       (\partial_{\nu_1}-i\partial_{\nu_2})^j
       \widehat{vr}(\nu)\right]_{\nu=0}\\
 &\quad=
 \begin{cases}
 v_0,&m=j=0,\\
 0,&m+j\ge1.
 \end{cases}
\end{aligned}
$$
Taking $m=0$ gives \eqref{eq:moment-identities}. Replacing $m$
by $m+1$ gives
$$
 \int_{\mathbb C}wv(w)r(w)w^m\overline{w}^j\,\dd A(w)
 =\int_{\mathbb C}v(w)r(w)w^{m+1}\overline{w}^j\,\dd A(w)
 =0\quad
$$
for $(m,j\in\mathbb N_0)$, which proves \eqref{E14}. Finally, \eqref{E23} with $(m,j)=(1,0)$
and \eqref{eq:moment-flat} give
$$
 \widehat{wvr}(\nu)
 =i(\partial_{\nu_1}+i\partial_{\nu_2})\widehat{vr}(\nu)
 =0\quad(|\nu|<\delta).
$$ This completes the proof.
\end{proof}

We now use the bounds for $\psi$ and $r$ to control the growth
of $f$ and $g$. The zeros of $\psi$ will also give nonradiality
of both symbols.

\begin{lemma}\label{L3}
The symbols in \eqref{eq:moment-symbols} are nonzero smooth
functions in $\mathcal D_c$. Neither is radial, even up to equality
almost everywhere. More precisely, with $\beta=1-1/(4\sigma)<1/2$,
there is a constant $C>0$, depending only on $c$ and $\rho$, such that
\begin{equation}\label{E1}
 |f(w)|\le C e^{c|w|^2},\quad
 |g(w)|\le C e^{\beta|w|^2}\quad(w\in\mathbb C).
\end{equation}
The symbol $g$ is bounded when $c\ge3/4$.
\end{lemma}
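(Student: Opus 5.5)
The plan is to read everything off Lemma \ref{L1} and Proposition \ref{L2}: the growth bounds come from \eqref{eq:moment-psi-bound}, and nonradiality comes from the zero of $\psi$ at $L$.

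\textbf{Smoothness and the bound for $f$.} Both symbols are smooth. $\psi$ is entire by Lemma \ref{L1}, $r\in\mathcal S(\mathbb C)$ by Proposition \ref{L2}, and the Gaussian factors are smooth. Applying \eqref{eq:moment-psi-bound} to $\overline{\psi(w)}$ gives
$$
 |f(w)|\le C\,|w r(w)|\,e^{(1-2\sigma)|w|^2}e^{\sigma|w|^2}
 =C\,|wr(w)|\,e^{(1-\sigma)|w|^2}.
$$
Since $1-\sigma=c$ and $w\mapsto wr(w)$ is Schwartz, hence bounded, this yields the first estimate in \eqref{E1}. The constant depends only on $c$ and $\rho$.

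\textbf{The bound for $g$.} Applying \eqref{eq:moment-psi-bound} at the point $\gamma w$ gives $|\psi(\gamma w)|\le Ce^{\sigma\gamma^2|w|^2}$. Because $\gamma=1/(2\sigma)$, we have $\sigma\gamma^2=1/(4\sigma)$, so the exponent for $g$ is
$$
 \frac{1}{4\sigma}-\frac1{2\sigma}+1=1-\frac1{4\sigma}=\beta .
$$
From $\sigma<1/2$ we get $1/(4\sigma)>1/2$, hence $\beta<1/2<c$, and therefore $g\in\mathcal D_c$. If $c\ge3/4$, then $\sigma\le1/4$, so $\beta\le0$ and $g$ is bounded.

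\textbf{Nonvanishing.} Evaluate at real points:
$$
 f(L/2)=\tfrac L2\,e^{(1-2\sigma)L^2/4}\,\psi(L/2)\,r(L/2)>0,
$$
using $\psi(L/2)>0$ from Lemma \ref{L1} and $r>0$ on $|w|\le L$ from Proposition \ref{L2}. Similarly
$$
 g\bigl(L/(2\gamma)\bigr)=e^{-(\gamma-1)L^2/(4\gamma^2)}\,\psi(L/2)>0 .
$$

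\textbf{Nonradiality.} This is the only step needing a small argument. By Lemma \ref{L1}, $\psi(L)=0$, so $f(L)=0$ and $g(L/\gamma)=0$.

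For $f$, consider the circle $|w|=L$. The function $\psi$ is entire and not identically zero (since $\psi(L/2)\neq0$), so it has only finitely many zeros on this compact circle. Choose $w_1$ with $|w_1|=L$ and $\psi(w_1)\ne0$. Then $w_1\ne0$ and $r(w_1)>0$, so $f(w_1)\ne0=f(L)$ although $|w_1|=|L|$.

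For $g$, the same argument applies on the circle $|w|=L/\gamma$, using the entire function $w\mapsto\psi(\gamma w)$. It gives a point $w_2$ on that circle with $g(w_2)\neq0=g(L/\gamma)$.

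Finally, $f$ and $g$ are continuous. As in the proof of Lemma \ref{lem:symbol-properties}, a continuous function that is radial almost everywhere must depend only on $|w|$ everywhere. This contradicts the pairs of points found above, so neither symbol is radial, even up to equality almost everywhere.
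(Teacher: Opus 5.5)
Your proof is correct and follows essentially the same route as the paper: both growth bounds come from \eqref{eq:moment-psi-bound} together with boundedness of $wr(w)$ and the exponent identity $1-\gamma+\sigma\gamma^2=1-1/(4\sigma)$. Nonradiality comes from $\psi(L)=0$ against a point on the circle $|w|=L$ where $\psi\neq0$ and $r>0$. The only differences are cosmetic: you compare $\beta<1/2<c$ where the paper uses $\beta\le 1-\sigma$, and you add an explicit nonvanishing check at $L/2$.
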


\begin{proof}
The the definition \eqref{eq:moment-symbols} and Lemma \ref{L1}
and Proposition \ref{L2} give smoothness of $f$ and $g$. By Proposition \ref{L2} again, $r\in\mathcal S(\mathbb C)$,  $wr(w)$
is bounded. Combining this bound with \eqref{eq:moment-psi-bound},
we obtain
$$
 |f(w)|\le C e^{(1-\sigma)|w|^2}
$$
for some constant $C>0$.
For $g$, the bound for $\psi$ and the parameter relations in
\eqref{E10} give
$$
 |g(w)|
 \le C\exp\left[
   \left(1-\gamma+\sigma\gamma^2\right)|w|^2\right]
 =C\exp\left[\left(1-\frac1{4\sigma}\right)|w|^2\right].
$$
The inequalities
$$
 1-\frac1{4\sigma}<\frac12,
 \quad
 1-\frac1{4\sigma}\le1-\sigma=c
$$
and the preceding bounds show that $f,g\in\mathcal D_c$ by
\eqref{E5}. The bound for $g$ also shows that it is bounded
when $\sigma\le1/4$, or equivalently $c\ge3/4$.

For nonradiality, Lemma \ref{L1} gives $\psi(L)=0$.
Because $\psi$ is a nonzero entire function, it cannot vanish on
the whole circle $|z|=L$. Choose $\zeta$ on this circle with
$\psi(\zeta)\ne0$. Proposition \ref{L2} gives $r(\zeta)>0$, then
\eqref{eq:moment-symbols} yields
$$
 f(L)=0,\quad f(\zeta)\ne0,\quad
 g(L/\gamma)=0,\quad g(\zeta/\gamma)\ne0.
$$
Both symbols are continuous, and the two points in each pair
have equal modulus but different symbol values. As in the proof
of Lemma \ref{lem:symbol-properties}, these values show that
neither symbol is radial, even up to equality almost everywhere.
The proof is complete.
\end{proof}

To compute the Toeplitz actions of the symbols in
\eqref{eq:moment-symbols}, we use the following extension of
\eqref{eq:scalar} for positive real $\lambda$. The formula follows
from the Gaussian integral identity in \cite[(2.1)]{Bauer2015}.
Let $\lambda>0$,
$0\le\tau<\lambda$ and $\ell\ge0$, and let
$\varphi:\mathbb C\to\mathbb C$ be entire with
$|\varphi(w)|\le C e^{\tau|w|^2+\ell|w|}$ for every $w\in\mathbb C$
and some constant $C>0$. Then
\begin{equation}\label{E2}
 \frac1\pi\int_{\mathbb C}\varphi(w)
 e^{-\lambda|w|^2+a\overline{w}}\,\dd A(w)
 =\lambda^{-1}\varphi(a/\lambda)\quad(a\in\mathbb C).
\end{equation}
The integral converges absolutely, locally uniformly in $a$.

Equation \eqref{E2} gives the operator actions needed to verify
the polynomial domains and to find test functions that show
both operators are nonzero.

\begin{lemma}\label{P1}
For $f,g$ defined in \eqref{eq:moment-symbols}, the extended
Toeplitz operators $\widetilde T_f$ and $\widetilde T_g$ in
$F^2(\mathbb C)$ are nonzero, and their domains contain
$\mathcal P=\mathbb C[z]$.
\end{lemma}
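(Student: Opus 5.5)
The plan is to compute both extended operators explicitly on monomials using the Gaussian formula \eqref{E2}, and to read off the conditions \eqref{E9} directly from the resulting formulas. The main tool is the following consequence of \eqref{E2}. The right side of \eqref{E2} is holomorphic in $a$ and the convergence is locally uniform. So we may differentiate $j$ times in $a$ under the integral and set $a=0$, which gives
$$
 \frac1\pi\int_{\mathbb C}\varphi(w)\overline w^{\,j}e^{-\lambda|w|^2}\,\dd A(w)=\lambda^{-1-j}\varphi^{(j)}(0)\qquad(j\ge0).
$$
This holds for every entire $\varphi$ with $|\varphi(w)|\le Ce^{\tau|w|^2+\ell|w|}$, where $\tau<\lambda$.

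For $g$, fix $p\in\mathcal P$ and apply this identity with $\varphi(w)=\psi(\gamma w)p(w)$ and $\lambda=\gamma$. The growth hypothesis holds: by \eqref{eq:moment-psi-bound}, $|\psi(\gamma w)|\le Ce^{\sigma\gamma^2|w|^2}=Ce^{\gamma|w|^2/2}$, so $\tau=\gamma/2<\gamma$ after absorbing the polynomial into $e^{\ell|w|}$. The same bound gives $gp\overline e_j\in L^1(d\mu)$, since the integrand is dominated by a polynomial times $e^{-\gamma|w|^2/2}$. The $j$-th coefficient in \eqref{eq:moment-domain} is then $\gamma^{-1-j}\varphi^{(j)}(0)/\sqrt{j!}$, and these are exactly the Taylor coefficients of
$$
 \widetilde T_gp(z)=\gamma^{-1}\varphi(z/\gamma)=\gamma^{-1}\psi(z)p(z/\gamma)
$$
with respect to the basis $e_j$. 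Their square sum equals the $F^2$ norm of this entire function. That norm is finite because $|\psi(z)|^2e^{-|z|^2}\le C^2e^{-(1-2\sigma)|z|^2}$ with $1-2\sigma>0$. Hence $\mathcal P\subset\mathcal D(\widetilde T_g)$. Taking $p=1$ gives $\widetilde T_g1=\gamma^{-1}\psi\ne0$, because $\psi(L/2)>0$ by Lemma \ref{L1}.

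For $f$, integrability is straightforward. We have $|f(w)p(w)\overline{w}^j|e^{-|w|^2}\le C|w|^{N}e^{-\sigma|w|^2}|r(w)|$, and $r\in\mathcal S(\mathbb C)$ by Proposition \ref{L2}. The hard step is the square-summability in \eqref{E9}. Crude estimates fail here. The function $fp$ is not in $L^2(d\mu)$, because $|fp|^2e^{-|w|^2}$ grows like $e^{(1-2\sigma)|w|^2}$ times a Schwartz factor. Estimating each coefficient in absolute value gives terms of size about $(2\sigma)^{-j/2}$, which is not summable. So cancellation from the structure of $f$ has to be used. My first attempt would be the following rewriting. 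Since $\overline{\psi(w)}e^{-2\sigma|w|^2}=v(w)/\psi(w)$ away from zeros, it is better to keep $f$ in the form $w\,\overline{\psi(w)}e^{-2\sigma|w|^2}r(w)$ and expand $\overline{\psi}$ in its conjugated theta series. Each term $e^{\sigma\overline w^2}\overline{e^{i(2k+1)\pi w/L}}$ times $e^{-2\sigma|w|^2}$ is a Gaussian with an antiholomorphic quadratic phase. Against $r$, whose Fourier transform has compact support by \eqref{E20}, each such term can be integrated via the Fourier side. The goal is to show that the generating function $G(z)=\int fp\,e^{z\overline w}\,d\mu(w)$ is entire with growth at most $e^{\theta|z|^2}$ for some $\theta<1/2$, so that $G\in F^2$ and its Taylor coefficients are the $e_j$-coefficients. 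An alternative, if this is cleaner, is to exploit the quasi-periodicity \eqref{eq:moment-psi-periods} together with a contour shift to trade the bad growth $e^{|z|^2/(4\sigma)}$ for Gaussian decay. I expect this square-summability estimate to be the main obstacle.

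Finally, for $\widetilde T_f\ne0$ it suffices to find one polynomial $p$ and one index $j$ with $\int fp\overline e_j\,d\mu\ne0$. I would test with $p=1$ and use the $j$-th Taylor coefficient of $G$. That coefficient is a moment of $\overline w^{\,j}\,w\,\overline{\psi}e^{-2\sigma|w|^2}r$, and I would show that some low moment is nonzero. For this I would use $r>0$ on $|w|\le L$ from Proposition \ref{L2} and the explicit theta coefficients of $\psi$, whose first nonzero Taylor coefficient is $\psi'(0)$.
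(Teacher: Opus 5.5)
Your treatment of $\widetilde T_g$ is correct and is the paper's argument.

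\textbf{Main gap: the domain of $\widetilde T_f$.} This half is the real content of the lemma, and you never show that $\sum_j\bigl|\int_{\mathbb C}fp\,\overline{e}_j\,d\mu\bigr|^2<\infty$. You correctly explain why crude bounds fail and name the right target, namely growth of $G$ below $e^{|z|^2/2}$. However, the theta-series expansion and the contour shift remain suggestions, so $\mathcal P\subset\mathcal D(\widetilde T_f)$ is left unproved.

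The missing idea is that $\overline\psi$ need not be expanded at all. Insert $r(w)=(2\pi)^{-2}\int\rho(\nu)e^{bw-\overline b\,\overline w}\,d\nu$ with $b=(\nu_2+i\nu_1)/2$. The integrand for $G(z)=\int f w^m e^{z\overline w}\,d\mu$ is then bounded by $C\rho(\nu)(1+|w|)^{m+1}e^{-\sigma|w|^2+|z||w|}$, because $|e^{bw-\overline b\,\overline w}|=1$. So Fubini applies, and for each fixed $\nu$ the inner $w$-integral is an exact Gaussian integral. Conjugating \eqref{E2} gives $\frac1\pi\int e^{-q|w|^2+sw+t\overline w}\,\overline{\psi(w)}\,dA=q^{-1}e^{st/q}\psi^\#(s/q)$ with $\psi^\#(\zeta)=\overline{\psi(\overline\zeta)}$. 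The factor $w^{m+1}$ is produced by $\partial_s^{m+1}$.

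Evaluate at $s=b$, $t=z-\overline b$, and use $|b|\le R/2$ on $\operatorname{supp}\rho$. This yields $|G(z)|\le C_m(1+|z|)^{m+1}e^{R|z|/(2q)}$, which is only exponential type. Hence $G\in F^2(\mathbb C)$, and its Taylor coefficients at $0$, obtained by differentiating under the integral, are exactly the coefficients required in \eqref{E9}. Your remark about integrating against $r$ on the Fourier side points this way. The compact support of $\rho$ pays off only when combined with this closed-form evaluation of the whole $\overline\psi$.

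\textbf{Second gap: $\widetilde T_f\neq0$.} This is also only announced. Positivity of $r$ on $|w|\le L$ cannot by itself fix the sign of a moment taken over all of $\mathbb C$, since $r$ may change sign outside that disk.

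Your route with $p=1$ can be completed. Since $r$ is real and radial, and $\psi$ is odd with real Taylor coefficients, angular averaging leaves only $\psi'(0)$. The $e_0$-coefficient of $\widetilde T_f1$ is then $\frac{\psi'(0)}{\pi}\int|w|^2e^{-q|w|^2}r(w)\,dA(w)$. Pairing with $\rho$ on the Fourier side, this equals $\frac{\psi'(0)}{4\pi^2q^2}\int\rho(\nu)\bigl(1-\frac{|\nu|^2}{4q}\bigr)e^{-|\nu|^2/(4q)}\,d\nu$. This is nonzero because $\psi'(0)\ne0$, $\rho\ge0$, and $|\nu|^2<R^2<\pi\sigma/2<8\sigma=4q$ on $\operatorname{supp}\rho$.

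The paper avoids this calculation by testing on the nonpolynomial $h_0(z)=\psi(z)/z\in F^2(\mathbb C)$. For this function $fh_0e^{-|w|^2}=vr$, so \eqref{eq:moment-identities} gives $\widetilde T_fh_0=v_0/\pi\ne0$ at once.
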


\begin{proof}
To compute $\widetilde T_g p$ for a fixed $p\in\mathcal P$, we use the
function $\psi$ defined in \eqref{eq:moment-psi}, which is entire
by Lemma \ref{L1}. The estimate for $g$ in \eqref{E1} gives
$g\in\mathcal D_d$ for some $0<d<1/2$, so
$\mathcal P\subset\mathcal D(T_g)\subset\mathcal D(\widetilde T_g)$.
For each fixed $z\in\mathbb C$, we integrate in
$w=x+iy\in\mathbb C$, where $x,y\in\mathbb R$, and apply
\eqref{E2} with $\varphi(w)=\psi(\gamma w)p(w)$,
$\lambda=\gamma$ and $a=z$.
For the parameters $\sigma,q,\gamma$ in \eqref{E10},
$\sigma\gamma^2=\gamma/2<\gamma$. The bound
\eqref{eq:moment-psi-bound} therefore verifies the growth
condition in \eqref{E2} and gives
\begin{equation}\label{eq:moment-right-polynomial}
 \begin{aligned}
 \widetilde T_g p(z)
 &=\frac1\pi\int_{\mathbb C}
       \psi(\gamma w)p(w)e^{z\overline{w}-\gamma|w|^2}\,\dd A(w)\\
 &=\gamma^{-1}\psi(z)p(z/\gamma).
 \end{aligned}
\end{equation}

To verify the polynomial domain for $\widetilde T_f$, define
$\psi^\#:\mathbb C\to\mathbb C$ by
$$\psi^\#(\zeta)=\overline{\psi(\overline{\zeta})}.$$
Since $\psi$ is entire by Lemma \ref{L1}, so is $\psi^\#$.
In the following Gaussian integrals, $w\in\mathbb C$ is the
integration variable, and $s,t\in\mathbb C$ are independent
parameters. The parameter $s$ may vary freely in $\mathbb C$
and is independent of the evaluation point $z$ used below.
With $q=2\sigma>0$ fixed as in \eqref{E10}, fix
$m\in\mathbb N_0$ and a compact set $K\subset\mathbb C^2$
of parameter pairs $(s,t)$.
Choose $M>0$, depending only on $K$, such that $|s|+|t|\le M$
for $(s,t)\in K$. Let $C_0>0$ be the constant in
\eqref{eq:moment-psi-bound}. Since $q=2\sigma$ and
$M|w|\le\sigma|w|^2/2+M^2/(2\sigma)$, we have
\begin{equation}\label{E22}
 \begin{aligned}
|w|^j\left|e^{-q|w|^2+sw+t\overline{w}}
                  \overline{\psi(w)}\right|&\le C_0(1+|w|)^{m+1}e^{-\sigma|w|^2+M|w|}\\
 &\quad\le C(1+|w|)^{m+1}e^{-\sigma|w|^2/2}
 \end{aligned}
\end{equation}
for $(s,t)\in K$, $w\in\mathbb C$ and $j=0,\ldots,m+1$,
where $C=C_0e^{M^2/(2\sigma)}>0$.
The last bound is integrable with respect to $w$ and uniform
for $(s,t)\in K$. With $t$ fixed, each derivative in $s$
introduces a factor $w$, so \eqref{E22} permits differentiation
under the integral in $w$ up to order $m+1$.

We now apply \eqref{E2} with $\lambda=q$,
$\varphi(w)=e^{\overline{t}w}\psi(w)$ and $a=\overline{s}$.
The growth condition holds because
$|\varphi(w)|\le C_0e^{\sigma|w|^2+|t||w|}$ and $\sigma<q$.
Then we see 
$$
 \frac1\pi\int_{\mathbb C}
 e^{-q|w|^2+\overline{t}w+\overline{s}\,\overline{w}}
 \psi(w)\,\dd A(w)
 =q^{-1}e^{\overline{s}\,\overline{t}/q}
          \psi(\overline{s}/q).
$$
Since $q>0$ is real, taking complex conjugates yields
 \begin{align*}
 &\frac1\pi\int_{\mathbb C}
 e^{-q|w|^2+sw+t\overline{w}}\overline{\psi(w)}\,\dd A(w)=q^{-1}e^{st/q}\overline{\psi(\overline{s}/q)}
 =q^{-1}e^{st/q}\psi^\#(s/q).
 \end{align*}
For each fixed $t\in\mathbb C$, regard both sides as functions
of $s$. By \eqref{E22}, we may differentiate $m+1$ times in $s$
under the integral in $w$, with $m,q,t$ fixed. The product rule gives
\begin{align}\label{eq:moment-left-gaussian}
 &\frac1\pi\int_{\mathbb C}
 w^{m+1}e^{-q|w|^2+sw+t\overline{w}}
 \overline{\psi(w)}\,\dd A(w)\nonumber\\
 &\quad=q^{-1}\partial_s^{m+1}
              \big[e^{st/q}\psi^\#(s/q)\big]\nonumber\\
 &\quad=q^{-m-2}e^{st/q}
       \sum_{\ell=0}^{m+1}\binom{m+1}{\ell}
       t^{m+1-\ell}(\psi^\#)^{(\ell)}(s/q).
 \end{align}
Here $(\psi^\#)^{(\ell)}$ denotes the $\ell$th complex derivative
of $\psi^\#$.

We now compute the action on $w^m$, keeping $m$ fixed.
For this calculation, we use \eqref{eq:moment-G} for $r$,
with $\rho$ as in \eqref{E13}.
For each fixed $z\in\mathbb C$, the representation of $r$
introduces the integration variable
$\nu=(\nu_1,\nu_2)^T\in\mathbb R^2$. We write
$$
 b=b(\nu)=\frac{\nu_2+i\nu_1}{2},\quad
 t=t(z,\nu)=z-\overline{b(\nu)}.
$$
Thus $b$ depends only on $\nu$, and $t$ depends on $z$ and $\nu$.
For $w=x+iy$, these definitions give
$i(x\nu_1+y\nu_2)=bw-\overline{b}\overline{w}$.
By the definition of $f$ in \eqref{eq:moment-symbols} and
$q=2\sigma$ in \eqref{E10},
$$
 f(w)e^{-|w|^2}
 =w e^{-q|w|^2}\overline{\psi(w)}r(w).
$$
We insert \eqref{eq:moment-G} and use the preceding substitutions
$b=(\nu_2+i\nu_1)/2$ and $t=z-\overline b$ to write
$$
 -q|w|^2+z\overline{w}+i(x\nu_1+y\nu_2)
 =-q|w|^2+bw+t\overline{w}.
$$
For fixed $z$, the pair $(b,t)$ ranges over a compact set as
$\nu$ varies over $\operatorname{supp}\rho$.
Estimate \eqref{E22} with $j=m+1$ and the compact support of
$\rho$ therefore justify Fubini's theorem in the calculation below.
Using these facts, we obtain
a function $k_{f,m}:\mathbb C\to\mathbb C$ satisfying
\begin{align}\label{E3}
 k_{f,m}(z)
 &:=\int_{\mathbb C}f(w)w^m e^{z\overline{w}}\,\dd\mu(w)\nonumber\\
 &=\frac1\pi\int_{\mathbb C}
 w^{m+1}e^{-q|w|^2+z\overline{w}}
 \overline{\psi(w)}r(w)\,\dd A(w)\nonumber\\
 &=\frac1{\pi(2\pi)^2}\int_{\mathbb R^2}\rho(\nu)
 \left[\int_{\mathbb C}w^{m+1}e^{-q|w|^2+bw+t\overline{w}}
 \overline{\psi(w)}\,\dd A(w)\right]\dd\nu\nonumber\\
 &=\frac1{q(2\pi)^2}\int_{\mathbb R^2}\rho(\nu)
   \left.\partial_s^{m+1}
      \big[e^{st/q}\psi^\#(s/q)\big]\right|_{s=b}\,\dd\nu.
\end{align}
%
The last equality in \eqref{E3} follows from
\eqref{eq:moment-left-gaussian} with $s=b(\nu)$ and
$t=z-\overline{b(\nu)}$. To estimate $k_{f,m}(z)$, we keep $z\in\mathbb C$ fixed
and estimate the integrand in \eqref{E3}. It follows that 
\begin{equation}\label{E24}
\begin{aligned}
 k_{f,m}(z)
 &=\frac{1}{q^{m+2}(2\pi)^2}
   \int_{\mathbb R^2}\rho(\nu)
   e^{b(\nu)(z-\overline{b(\nu)})/q}\\
 &\quad\times\sum_{\ell=0}^{m+1}\binom{m+1}{\ell}
   \bigl(z-\overline{b(\nu)}\bigr)^{m+1-\ell}
   (\psi^\#)^{(\ell)}\bigl(b(\nu)/q\bigr)\,\dd\nu.
\end{aligned}
\end{equation}
By \eqref{E13},
$$
 |b(\nu)|=\frac{|\nu|}{2}\le\frac R2,
 \quad
 \left|\frac{b(\nu)}q\right|\le\frac{R}{2q}
 \quad(\nu\in\operatorname{supp}\rho).
$$
Since $\psi^\#$ is entire, we may define
$$
 M_m:=1+\max_{0\le\ell\le m+1}
 \sup_{|\zeta|\le R/(2q)}
 \bigl|(\psi^\#)^{(\ell)}(\zeta)\bigr|<\infty.
$$
This constant is independent of $z$ and $\nu$. For
$z\in\mathbb C$ and $\nu\in\operatorname{supp}\rho$, we have
\begin{align*}
 |z-\overline{b(\nu)}|&\le |z|+R/2
\end{align*}
and  \begin{equation*}
\left|e^{b(\nu)(z-\overline{b(\nu)})/q}\right|
 =\exp\left(
   \frac{\operatorname{Re}(b(\nu)z)-|b(\nu)|^2}{q}
   \right)
 \le e^{R|z|/(2q)}.
 \end{equation*}
Then the finite sum  in \eqref{E24} satisfies
\begin{align*}
 &\qquad\left|\sum_{\ell=0}^{m+1}\binom{m+1}{\ell}
 \bigl(z-\overline{b(\nu)}\bigr)^{m+1-\ell}
 (\psi^\#)^{(\ell)}\bigl(b(\nu)/q\bigr)\right|\\
 &\quad\le M_m\sum_{\ell=0}^{m+1}\binom{m+1}{\ell}
          (|z|+R/2)^{m+1-\ell}
 =M_m(1+|z|+R/2)^{m+1}.
\end{align*}
Using $1+|z|+R/2\le(1+R/2)(1+|z|)$ in \eqref{E24}, we obtain
\begin{equation}\label{E25}
\begin{aligned}
 |k_{f,m}(z)|
 &\le\frac{M_m e^{R|z|/(2q)}}{q^{m+2}(2\pi)^2}
       (1+|z|+R/2)^{m+1}
       \int_{\mathbb R^2}|\rho(\nu)|\,\dd\nu\\
 &\le C_m(1+|z|)^{m+1}e^{R|z|/(2q)}
 \quad(z\in\mathbb C),
\end{aligned}
\end{equation}
where
$$
 C_m:=\frac{M_m(1+R/2)^{m+1}}{q^{m+2}(2\pi)^2}
       \int_{\mathbb R^2}|\rho(\nu)|\,\dd\nu>0.
$$
The constant $C_m$ depends only on $m$, $c$, $R$ and $\rho$,
and is independent of $z$.

For each fixed $\nu$, the integrand in \eqref{E24} is entire
in $z$. On every disk $|z|\le A$, with $A>0$, the same estimates
bound its absolute value, including the prefactor, by
$$
 \frac{M_m(1+A+R/2)^{m+1}e^{RA/(2q)}}{q^{m+2}(2\pi)^2}
 |\rho(\nu)|\in L^1(\mathbb R^2,\dd\nu).
$$
This local uniform domination shows that $k_{f,m}$ is entire.
Finally, \eqref{E25} and
$R|z|/q\le |z|^2/2+R^2/(2q^2)$ give
$$
\begin{aligned}
 \int_{\mathbb C}|k_{f,m}(z)|^2\,\dd\mu(z)
 &\le\frac{C_m^2}{\pi}\int_{\mathbb C}
 (1+|z|)^{2m+2}e^{-|z|^2+R|z|/q}\,\dd A(z)\\
 &\le\frac{C_m^2e^{R^2/(2q^2)}}{\pi}
 \int_{\mathbb C}(1+|z|)^{2m+2}e^{-|z|^2/2}\,\dd A(z)
 <\infty.
\end{aligned}
$$
Thus, $k_{f,m}\in F^2(\mathbb C)$.
For the orthonormal basis $(e_j)_{j\ge0}$ used in
\eqref{eq:moment-domain}, fix $j\in\mathbb N_0$.
The growth bound \eqref{E1} gives
$fw^m\overline{e}_j\in L^1(d\mu)$ and justifies $j$
differentiations in the parameter $z$ under the defining
integral in $w$ in \eqref{E3}, with $m$ fixed.
Evaluating the resulting identity at $z=0$ gives
$$
 \int_{\mathbb C}f(w)w^m\overline{e_j(w)}\,\dd\mu(w)
 =\frac{k_{f,m}^{(j)}(0)}{\sqrt{j!}}.
$$
These are the coefficients of $k_{f,m}$ in that orthonormal
basis, so they are square summable. Then the domain conditions
\eqref{E9} and the expansion \eqref{eq:moment-domain} give
$w^m\in\mathcal D(\widetilde T_f)$ and
$\widetilde T_f(w^m)=k_{f,m}$. By linearity,
$\mathcal P\subset\mathcal D(\widetilde T_f)$.

Equation \eqref{eq:moment-right-polynomial} gives
$\widetilde T_g1=\gamma^{-1}\psi\ne0$, since $\psi$ is nonzero
by Lemma \ref{L1}. To prove that $\widetilde T_f$ is nonzero,
define $h_0:\mathbb C\to\mathbb C$ by
$$
 h_0(z)=
 \begin{cases}
  \psi(z)/z,&z\ne0,\\
  \psi'(0),&z=0
 \end{cases}
$$
which is entire because $\psi(0)=0$ by Lemma \ref{L1}.
Continuity bounds $h_0$ on the unit disk, and
\eqref{eq:moment-psi-bound} gives
$|h_0(z)|\le C e^{\sigma|z|^2}$ for $|z|\ge1$.
Since $\sigma<1/2$, these bounds imply $h_0\in F^2(\mathbb C)$.
Using \eqref{eq:moment-periodic-weight} and \eqref{eq:moment-symbols}, we obtain
$$
 f(w)h_0(w)e^{-|w|^2}=v(w)r(w).
$$
The identity holds also at $w=0$. Clearly,
$vr\in\mathcal S(\mathbb C)$ by Proposition \ref{L2}. Then \eqref{E19}
shows that $vr$ remains integrable after multiplication by
any polynomial in $w$ and $\overline{w}$.
For each fixed $j\in\mathbb N_0$, apply
\eqref{eq:moment-identities} to the integral
$\int_{\mathbb C}f(w)h_0(w)\overline{e_j(w)}\,\dd\mu(w)$
in $w$. This gives normalized coefficients $v_0/\pi$ for
$j=0$ and zero for $j\ge1$.
The average $v_0$ defined in \eqref{E11} is positive by
Proposition \ref{L2}, so the conditions in \eqref{E9} and the
expansion \eqref{eq:moment-domain} give
\begin{equation}\label{eq:moment-nonzero}
 h_0\in\mathcal D(\widetilde T_f),\quad
 \widetilde T_f h_0=\frac{v_0}{\pi}\ne0.
\end{equation}
Both operators are therefore nonzero in $F^2(\mathbb C)$. The proof is complete.
\end{proof}

Using \eqref{eq:moment-right-polynomial}, we can verify the
zero product by applying the integral identities for $wvr$
in \eqref{E14}.

\begin{proposition}\label{P2}
For the symbols in \eqref{eq:moment-symbols}, the product
$\widetilde T_f\widetilde T_g$ on $F^2(\mathbb C)$ is defined
and zero on $\mathcal P$.
\end{proposition}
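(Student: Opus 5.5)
Fix $p\in\mathcal P$. By \eqref{eq:moment-right-polynomial} from Lemma \ref{P1}, $p\in\mathcal D(\widetilde T_g)$ and $\widetilde T_g p=\gamma^{-1}\psi\,p_\gamma$, where $p_\gamma(z)=p(z/\gamma)$ is again a holomorphic polynomial. The task is then to show two things. First, $h=\psi p_\gamma$ lies in $\mathcal D(\widetilde T_f)$. Second, every coefficient $\int_{\mathbb C} f h\,\overline{e_j}\,d\mu$ vanishes. Then \eqref{E9} holds trivially and \eqref{eq:moment-domain} gives $\widetilde T_f h=0$. By linearity it suffices to treat $h(w)=\psi(w)w^m$ for each $m\in\mathbb N_0$.

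The key step is the pointwise identity coming from \eqref{eq:moment-periodic-weight} and \eqref{eq:moment-symbols}:
$$
 f(w)\psi(w)w^m e^{-|w|^2}
 =w e^{-2\sigma|w|^2}|\psi(w)|^2 r(w)w^m
 =w\,v(w)r(w)\,w^m .
$$
Hence, for each $j\ge0$,
$$
 \int_{\mathbb C} f(w)\psi(w)w^m\overline{e_j(w)}\,d\mu(w)
 =\frac{1}{\pi\sqrt{j!}}\int_{\mathbb C} w v(w)r(w)w^m\overline w^{\,j}\,dA(w).
$$
This integral vanishes by \eqref{E14} of Proposition \ref{L2}.

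Integrability is needed first. We have $vr\in\mathcal S(\mathbb C)$ by Proposition \ref{L2}. By \eqref{E19}, $|w|^{m+j+1}|v r|$ is $dA$-integrable, so $f h\overline{e_j}\in L^1(d\mu)$ for every $j$. This is the first condition in \eqref{E9}. The square-summability condition holds because all coefficients are zero. We also note $h\in F^2(\mathbb C)$: by \eqref{eq:moment-psi-bound}, $|h(w)|\le C(1+|w|)^m e^{\sigma|w|^2}$ with $\sigma<1/2$, and in any case $h$ is already in $F^2$ as the image of $\widetilde T_g$. Thus $h\in\mathcal D(\widetilde T_f)$, so $p\in\mathcal D(\widetilde T_f\widetilde T_g)$ and $\widetilde T_f\widetilde T_g p=\gamma^{-1}\widetilde T_f(\psi p_\gamma)=0$.

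The step needing the most care is the bookkeeping. One must check that the linear combination $\psi p_\gamma$ is handled termwise, which holds because the domain is a linear space and the coefficient map is linear. One must also check that the identity $f\psi e^{-|\cdot|^2}=wvr$ really holds with the conjugate $\overline{\psi}$ in $f$ pairing with $\psi$ from $\widetilde T_g p$. Both are direct from the definitions, so there is no genuine analytic obstacle beyond Proposition \ref{L2}.
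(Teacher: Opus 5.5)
Your proposal is correct and follows essentially the same route as the paper. The paper likewise uses \eqref{eq:moment-right-polynomial} to write $f(w)(\widetilde T_g p)(w)e^{-|w|^2}=\gamma^{-1}wv(w)r(w)p(w/\gamma)$, gets integrability from $vr\in\mathcal S(\mathbb C)$ and \eqref{E19}, and makes every coefficient vanish by expanding $p(w/\gamma)$ into monomials and applying \eqref{E14}; your reduction to $h=\psi w^m$ by linearity is just a reordering of that monomial expansion.
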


\begin{proof}
For $p\in\mathcal P$, equations
\eqref{eq:moment-periodic-weight},
\eqref{eq:moment-symbols}, and
\eqref{eq:moment-right-polynomial} give
$$
 f(w)(\widetilde T_g p)(w)e^{-|w|^2}
 =\gamma^{-1}wv(w)r(w)p(w/\gamma).
$$
Moreover, Proposition \ref{L2} gives $vr\in\mathcal S(\mathbb C)$, so
\eqref{E19} shows that the right side multiplied by any
$\overline{w}^j$ is integrable. Expanding $p(w/\gamma)$ into monomials
and applying \eqref{E14} shows that each of
these integrals is zero.
By \eqref{E9} and \eqref{eq:moment-domain},
$\widetilde T_g p\in\mathcal D(\widetilde T_f)$ and
$\widetilde T_f\widetilde T_g p=0$ in $F^2(\mathbb C)$, proving
\eqref{eq:moment-product}, which completes the proof.
\end{proof}

The preceding propositions complete the operator part of the
construction, so we can combine them with the symbol properties.

\begin{proof}[Proof of Theorem \ref{thm:moment-example}]
For the symbols in \eqref{eq:moment-symbols}, Lemma \ref{L3}
gives smoothness, growth and nonradiality, including the
boundedness of $g$ when $c\ge3/4$. Lemma \ref{P1} proves
that both operators are nonzero in $F^2(\mathbb C)$ and that
their domains contain $\mathcal P$. Proposition \ref{P2} gives
the zero product in \eqref{eq:moment-product}. So, we finish the proof.
\end{proof}



\section{Concluding remarks}

The counterexamples above show that the zero product implication
fails for general symbols. We now ask whether it holds for
pluriharmonic symbols whose entire parts are of exponential type.
Following \cite{Bauer2015}, we consider
pluriharmonic symbols of the form
$$
 f=f_1+\overline{f_2},\quad g=g_1+\overline{g_2},
 \quad f_1,f_2,g_1,g_2\in\Hol(\Cn),
$$
and require all four entire parts to be of exponential type.
This means that there are constants $C,a>0$ such that
$$
 |f_1(z)|+|f_2(z)|+|g_1(z)|+|g_2(z)|
 \le Ce^{a|z|}\quad(z\in\Cn).
$$

The Toeplitz operators with these symbols are densely
defined on the Fock sapce, see \cite{Bauer2015}. We consider the following question on the Fock space.
\begin{question}\label{q:pluriharmonic-zero-product}
Let $n\ge1$, and let $f,g$ have the pluriharmonic
representations and growth bound above. If $T_fT_g=0$
on $F^2(\Cn)$, is it true that $f=0$ or $g=0$ on $\Cn$?
\end{question}

For $n=1$, Bauer, Choe and Koo proved
\cite[Theorem 4.2]{Bauer2015} that
$$
 T_fT_g=T_gT_f=0\quad\Longrightarrow\quad f=0\ \hbox{or}\ g=0
$$
for symbols with these growth conditions. Immediately before
that theorem, they ask whether the hypothesis can be reduced
to $T_fT_g=0$. Question \ref{q:pluriharmonic-zero-product}
includes their question in dimension one and asks for the
same conclusion in every complex dimension.


\begin{thebibliography}{99}
\bibitem{AC2001}
P. Ahern and \v{Z}. \v{C}u\v{c}kovi\'c,
\emph{A theorem of Brown--Halmos type for Bergman space Toeplitz
operators},
J. Funct. Anal. \textbf{187} (2001), 1, 200--210.
\href{https://doi.org/10.1006/jfan.2001.3811}{doi 10.1006/jfan.2001.3811}.

\bibitem{AC2004}
P. Ahern and \v{Z}. \v{C}u\v{c}kovi\'c,
\emph{Some examples related to the Brown--Halmos theorem for the
Bergman space},
Acta Sci. Math. (Szeged) \textbf{70} (2004), 1--2, 373--378.

\bibitem{AV}
A. Aleman and D. Vukoti\'c,
\emph{Zero products of Toeplitz operators},
Duke Math. J. \textbf{148} (2009), 3, 373--403.
\href{https://doi.org/10.1215/00127094-2009-029}{doi 10.1215/00127094-2009-029}.

\bibitem{Bauer2015}
W. Bauer, B. R. Choe and H. Koo,
\emph{Commuting Toeplitz operators with pluriharmonic symbols
on the Fock space},
J. Funct. Anal. \textbf{268} (2015), 10, 3017--3060.
\href{https://doi.org/10.1016/j.jfa.2015.03.003}{doi 10.1016/j.jfa.2015.03.003}.

\bibitem{BL}
W. Bauer and T. Le,
\emph{Algebraic properties and the finite rank problem for Toeplitz
operators on the Segal--Bargmann space},
J. Funct. Anal. \textbf{261} (2011), 9, 2617--2640.
\href{https://doi.org/10.1016/j.jfa.2011.07.006}{doi 10.1016/j.jfa.2011.07.006}.

\bibitem{BH}
A. Brown and P. R. Halmos,
\emph{Algebraic properties of Toeplitz operators},
J. Reine Angew. Math. \textbf{213} (1964), 89--102.
\href{https://doi.org/10.1515/crll.1964.213.89}{doi 10.1515/crll.1964.213.89}.

\bibitem{Grafakos2014}
L. Grafakos,
\emph{Classical Fourier Analysis},
Third Edition, Graduate Texts in Mathematics, vol. 249,
Springer, New York, 2014.
\href{https://doi.org/10.1007/978-1-4939-1194-3}{doi 10.1007/978-1-4939-1194-3}.

\bibitem{Igusa2000}
J.i. Igusa,
\textit{An Introduction to the Theory of Local Zeta Functions},
AMS/IP Studies in Advanced Mathematics, vol. 14,
American Mathematical Society, Providence, RI;
International Press, Cambridge, MA, 2000.

\bibitem{JohnParthasarathy2021}
T. C. John and K. R. Parthasarathy,
\emph{A common parametrization for finite mode Gaussian states,
their symmetries, and associated contractions with some applications},
J. Math. Phys. \textbf{62} (2021), 022102.

\bibitem{Knapp2016}
A. W. Knapp,
\emph{Basic Real Analysis},
Digital Second Edition,
Published by the Author, East Setauket, NY, 2016.

\bibitem{Le2009}
T. Le,
\emph{Finite-rank products of Toeplitz operators in several
complex variables},
Integral Equations Operator Theory \textbf{63} (2009), 4, 547--555.
\href{https://doi.org/10.1007/s00020-009-1661-6}{doi 10.1007/s00020-009-1661-6}.

\bibitem{Le2010}
T. Le,
\emph{A refined Luecking's theorem and finite-rank products of
Toeplitz operators},
Complex Anal. Oper. Theory \textbf{4} (2010), 2, 391--399.
\href{https://doi.org/10.1007/s11785-009-0008-2}{doi 10.1007/s11785-009-0008-2}.

\bibitem{LeTikaradze2022}
T. Le and A. Tikaradze,
\emph{A generalization of the Brown--Halmos theorems for the unit ball},
Adv. Math. \textbf{404} (2022), 108411.
\href{https://doi.org/10.1016/j.aim.2022.108411}{doi 10.1016/j.aim.2022.108411}.

\bibitem{DLMF}
National Institute of Standards and Technology,
\emph{NIST Digital Library of Mathematical Functions},
Sections 1.9 and 20.2.

\bibitem{DasNarayanan2025}
S. Das and E. K. Narayanan,
\emph{Zero products of Toeplitz operators on the Hardy and
Bergman spaces over an annulus},
New York J. Math. \textbf{31} (2025), 984--1001.

\end{thebibliography}
\end{document}